\newtheorem{sat}{Theorem}[section]		
\newtheorem{lem}[sat]{Lemma}
\newtheorem*{claim}{Claim}
\newtheorem{prop}[sat]{Proposition}
\newtheorem*{defi*}{Definition}			
\newtheorem*{bei*}{Example}
\newtheorem*{sat*}{Theorem}				
\newtheorem*{kor*}{Corollary}
\newtheorem*{rmk*}{Remark}				
\newtheorem*{quest*}{Question}	
\newtheorem{fact}{Fact}	
\let\ssection=\section
\renewcommand{\section}{\setcounter{equation}{0}\ssection}
\newtheorem*{namedtheorem}{\theoremname}
\newcommand{\theoremname}{testing}
\newenvironment{named}[1]{\renewcommand{\theoremname}{#1}\begin{namedtheorem}}{\end{namedtheorem}}
\theoremstyle{remark}
\newtheorem*{bem}{Remark}
\newtheorem*{namedtheoremr}{\theoremnamer}
\newcommand{\theoremnamer}{testing}
			\newcommand{\BH}{\mathbb H}
\newcommand{\BR}{\mathbb R}			
\newcommand{\BN}{\mathbb N}			
\newcommand{\BS}{\mathbb S}			\newcommand{\BZ}{\mathbb Z}
		\newcommand{\CB}{\mathcal B}
\newcommand{\D}{\partial}
\newcommand{\DD}{\nabla}
\newcommand{\Mod}{\mathrm{Mod}}
\DeclareMathOperator{\Isom}{Isom}	
\DeclareMathOperator{\vol}{vol}		
\DeclareMathOperator{\diam}{diam}
\newcommand{\comment}[1]{}
\DeclareMathOperator{\Dome}{Dome}
\DeclareMathOperator{\BCG}{Bar}
\DeclareMathOperator{\Center}{Center}
\newcommand{\norm}[1]{\lVert #1 \rVert}
\newcommand{\BB}{\mathbb{B}}
\begin{document}

\title[]{Harmonic extensions of quasiregular maps}
\author{Pekka Pankka}
\address{Department of Mathematics and Statistics, University of Helsinki}
\email{pekka.pankka@helsinki.fi}
\author{Juan Souto}
\address{Univ Rennes, CNRS, IRMAR - UMR 6625, F-35000 Rennes, France}
\email{juan.souto@univ-rennes1.fr}
\date{\today}

\thanks{P.P. was partially supported by the Academy of Finland project \#297258.}
\thanks{J.S. benefits from the support of the French government {\em Investissements d'Avenir} program ANR-11-LABX-0020-01.}
\thanks{Both authors were partially supported by the Projet international de coop\'eration scietifique PICS 7734.}

\begin{abstract}
We prove that every non-constant quasiregular selfmap of the $n$-sphere $\BS^{n}$ admits a harmonic extension to the hyperbolic space $\BH^{n+1}$ for $n\ge 2$. 
\end{abstract}

\maketitle


\section{Introduction}

A continuous self-map $f\colon \BS^n \to \BS^n$ of the $n$-sphere $\BS^n$, for $n\ge 2$, is \emph{quasiregular} if it belongs to the Sobolev space $W^{1,n}_\mathrm{loc}(\BS^n, \BS^n)$ and there exists a constant $K\ge 1$ for which 
\begin{equation}
\label{eq:K}
\norm{Df}^n \le K\cdot \det(Df)
\end{equation}
almost everywhere in $\BS^n$. In this paper we prove that, after identifying $\BS^n$ with the boundary at infinity of hyperbolic space $\BH^{n+1}$, quasiregular maps admit harmonic extensions to $\BH^{n+1}$:

\begin{sat}\label{harmonic}
Let $n\ge 2$ and $f\colon \BS^n \to \BS^n$ a non-constant quasiregular map. Then there exists a harmonic map $H_f \colon \BH^{n+1}\to \BH^{n+1}$ extending $f$.
\end{sat}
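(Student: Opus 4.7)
The plan is to produce $H_f$ by first constructing an explicit smooth extension $F_0 : \BH^{n+1} \to \BH^{n+1}$ of $f$ with uniformly bounded energy density, and then deforming $F_0$ to a harmonic map via the harmonic map heat flow on $\BH^{n+1}$. For the initial extension, the natural candidate is a barycenter construction in the spirit of Douady-Earle and Besson-Courtois-Gallot. For each $x\in\BH^{n+1}$ one has the visual probability measure $\mu_x$ on $\D_\infty\BH^{n+1}=\BS^n$. Since a non-constant quasiregular map is open, discrete, and has positive Jacobian almost everywhere, the pushforward $f_*\mu_x$ is a continuously varying probability measure that is not a point mass. Define $F_0(x)$ as the unique critical point in $\BH^{n+1}$ of the strictly convex Busemann integral
\[
y \mapsto \int_{\BS^n} B_\xi(y, x)\, d(f_*\mu_x)(\xi),
\]
where $B_\xi$ is the Busemann function centered at $\xi$. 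Smoothness of $F_0$ in $x$ follows from the implicit function theorem, provided nondegeneracy of the Hessian of the integrand is established.

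Next, the plan is to establish two key properties of $F_0$. First, continuity at each $\xi\in\BS^n$ with $F_0(\xi)=f(\xi)$: as $x\to\xi$ the measure $\mu_x$ concentrates at $\xi$, and $f_*\mu_x$ therefore concentrates at $f(\xi)$, forcing the barycenter to converge to $f(\xi)$. Second, a uniform bound $\norm{dF_0}\le C(n,K)$: using Möbius equivariance of the visual measures under the action of $\Isom(\BH^{n+1})$ on source and target, the estimate reduces to a single basepoint and becomes a linear-algebraic inequality where the distortion inequality \eqref{eq:K} enters directly.

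Given such an $F_0$, the next step is to run the harmonic map heat flow $\D_t F_t = \tau(F_t)$ with initial data $F_0$. Since the target $\BH^{n+1}$ has pinched negative curvature and $F_0$ has bounded energy density, long-time existence and smoothness of $F_t$ follow from classical parabolic results of Eells-Sampson and Hamilton as adapted to hyperbolic targets by Li-Tam and Li-Wang. Nonpositive curvature makes the distance function convex along pairs of geodesics, so $d(F_t(x), F_0(x))$ is a subsolution of a heat equation with vanishing data at infinity, yielding a uniform bound independent of $t$. Subconvergence as $t\to\infty$ produces the harmonic map $H_f$, and the distance bound together with the boundary values of $F_0$ forces $H_f$ to extend $f$ as well.

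The chief obstacle is the branching of $f$. For quasisymmetric (and hence homeomorphic) boundary maps the analogous program is by now standard, but for a genuinely quasiregular $f$ the pushforward $f_*\mu_x$ can concentrate near branch values, so one must show that its barycenter remains nondegenerate and depends smoothly on $x$ in a $K$-controlled way. Obtaining a uniform lower bound on the Hessian of the Busemann integrand, which underlies both the smoothness of $F_0$ and the quantitative derivative bound $\norm{dF_0}\le C(n,K)$, is the technical heart of the argument; once it is in place, the heat flow step and the preservation of boundary values proceed by now-standard arguments.
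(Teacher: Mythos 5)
Your starting point is the same as the paper's: construct a Besson--Courtois--Gallot-style barycentric extension $F_0$ of $f$, check that it extends $f$ continuously, and establish a uniform Lipschitz bound $\norm{dF_0}\le C(n,K,d)$. Where you diverge is that you then propose to run the harmonic map heat flow from $F_0$, whereas the paper solves Dirichlet problems on an exhaustion by balls $\BB(o,R)$, following Benoist--Hulin, and shows the solutions stay uniformly close to $F_0$. Both routes are in the literature for the quasiconformal case; the paper chooses Benoist--Hulin rather than the Li--Tam/Li--Wang/Lemm--Markovic flow.

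The genuine gap is that you have mis-located the chief obstacle. You flag degeneracy of the Hessian of the Busemann integral near branch values, but this never happens: for a non-constant quasiregular $f$, the pushforward $f_*\mu_x$ is absolutely continuous, hence atomless, so the Hessian is automatically positive definite, and smoothness of $F_0$ with the Lipschitz bound is the easy part (the paper gets it by a compactness argument, Proposition \ref{prop-lipschitz}). The real problem is the \emph{absence of a lower bound}: once $\deg f\ge 2$, $F_0$ is not a quasi-isometry, and $d_{\BH^{n+1}}(F_0(x),F_0(y))$ can be far smaller than $d_{\BH^{n+1}}(x,y)$. Every existing proof that the deformation (heat flow or Dirichlet exhaustion) stays at bounded distance from the initial map uses a coarse lower bound of exactly this type, so your claim that ``the heat flow step and the preservation of boundary values proceed by now-standard arguments'' begs the central question. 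The paper's new technical content consists precisely of the substitutes for that lower bound: Proposition \ref{prop-volume} (sets of definite visual volume are not sent to sets of tiny visual diameter) and Proposition \ref{prop-radialQI} ($F_0$ expands distances linearly along a set of directions of measure at least $1-\varepsilon$), both proved via a quasiregular compactness theorem (Proposition \ref{prop-compactness}) and modulus-versus-hyperbolic-distance estimates. Without analogues of these, the maximum-principle step in your heat-flow argument does not close, and the proposal as written does not yield the bounded-distance estimate you need to pass to the limit and recover the boundary values.
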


Note at this point that quasiregular maps are closely related to quasiconformal maps. In fact, a quasiconformal map is nothing other than a quasiregular homeomorphism. The existence of harmonic extensions of quasiconformal maps was conjectured by Schoen \cite{Schoen} and Li--Wang \cite{Li-Wang}, and was proved by Markovic \cite{Markovic-Invent,Markovic-JAMS} in low-dimensions and by Lemm--Markovic \cite{Lemm-Markovic} in general. A different solution, albeit with certain similarities, was given then by Benoist and Hulin \cite{Benoist-Hulin} who proved the existence of harmonic maps at bounded distance of quasi-isometric embeddings of rank one symmetric spaces, allowing thus the dimension of the target to be larger than that of the domain. In fact, to prove Theorem \ref{harmonic} we follow the lines of the argument used by Benoist and Hulin \cite{Benoist-Hulin}, and we strongly recommend the reader to study that beautifully written paper before working through the details of the proof of Theorem \ref{harmonic}.

As we just said, we will follow the argument of Benoist and Hulin. In doing so we find ourself from the outset with an obvious difficulty. In their setting, Benoist and Hulin start with a quasi-isometry and then construct a harmonic map at bounded distance from that quasi-isometry. However, a typical quasiregular map $\BS^n \to \BS^n$ has degree greater than 1 and hence does not admit a quasi-isometric extension to hyperbolic space. In fact, quasiconformal maps $\BS^n \to \BS^n$ are precisely those quasiregular maps which extend to a quasi-isometry $\BH^{n+1}\to \BH^{n+1}$. 

To bypass this difficulty we consider Besson, Courtois and Gallot's barycentric extension $F_f:\BH^{n+1}\to\BH^{n+1}$ of $f$, and one could actually summarise the proof of Theorem \ref{harmonic} as follows: \emph{we run the Benoist-Hulin argument using the barycentric extension $F_f$ in lieu of their quasi-isometry.} It turns out that to do so we need to establish following 3 facts on the barycentric extension:
\begin{enumerate}
\item The map $F_f$ is uniformly Lipschitz and has uniformly bounded second derivative.
\item The map $F_f$ does not map sets of large visual volume to sets of small visual diameter. 
\item The map $F_f$ behaves like a quasi-isometry in most directions.
\end{enumerate}
These three facts are the content of Proposition \ref{prop-lipschitz}, Proposition \ref{prop-volume}, and Proposition \ref{prop-radialQI} below, and the main work of this paper is to establish them. The proofs use very classical tools such as the relation between modulus of annuli in $\BS^n$ to distances in $\BH^{n+1}$, together with a compactness result that might be of independent interest and which we state therefore here in the introduction:

\begin{prop}\label{prop-compactness}
Let $n\ge 2$, $K\ge 1$, and $d\in \BN$. Then, given a sequence $(f_m)$ of $K$-quasiregular mappings $\BS^n \to \BS^n$ of degree at most $d$, there exists a set $P\subset \BS^n$ of cardinality at most $d$ and a subsequence $(f_{m_k})$ converging locally uniformly on $\BS^n\setminus P$ to a $K$-quasiregular mapping $f\colon \BS^n \to \BS^n$ of degree at most $d$. 
\end{prop}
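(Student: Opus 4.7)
The proof follows the standard bubbling-compactness pattern: extract a subsequence that converges locally uniformly off a finite exceptional set $P\subset\BS^n$, bound $|P|$ using the degree hypothesis, and extend the limit across $P$ by a removable-singularity theorem. To set things up, pass first to a subsequence (still denoted $(f_m)$) converging pointwise on a countable dense subset of $\BS^n$, and define $P\subset\BS^n$ to be the set of points at which $(f_m)$ fails to be equicontinuous, i.e.\ $p\in P$ iff there exist $\epsilon>0$ and sequences $x_m,x'_m\to p$ with $d_{\BS^n}(f_m(x_m),f_m(x'_m))\ge\epsilon$ along a subsequence of $m$. By construction $(f_m)$ is equicontinuous on $\BS^n\setminus P$, so Arzel\`a--Ascoli applied on a compact exhaustion of $\BS^n\setminus P$ extracts a further subsequence converging locally uniformly on $\BS^n\setminus P$ to a continuous map $f$. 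Since the class of $K$-quasiregular maps is closed under local uniform convergence (Reshetnyak--Rickman), the limit $f$ is $K$-quasiregular on $\BS^n\setminus P$.

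The heart of the argument is to show $|P|\le d$. At each $p\in P$, invoke Miniowitz's quasiregular analogue of Zalcman's rescaling lemma: there are points $x_m\to p$ and scales $\rho_m\to 0$ such that, in a chart around $p$, the rescalings $g_m(\zeta):=f_m(x_m+\rho_m\zeta)$ converge locally uniformly on $\BR^n$ to a \emph{nonconstant} $K$-quasiregular map $g\colon\BR^n\to\BS^n$. By Rickman's Picard theorem and the openness and discreteness of nonconstant quasiregular maps, $g$ surjects onto $\BS^n$ away from a set of at most finitely many omitted values, and in particular attains a regular value $y_0$ inside some compact $C\subset\BR^n$. Picking $N$ distinct points $p_1,\dots,p_N\in P$ with pairwise disjoint small neighborhoods, one runs this construction in parallel; a Sard-type argument provides a common regular value $y_0\in\BS^n$ which, by local uniform convergence, has at least one $f_m$-preimage inside each of the $N$ shrinking neighborhoods for $m$ large. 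Since nonconstant quasiregular maps are orientation-preserving, $\deg(f_m)\ge N$ for large $m$, which in view of the hypothesis $\deg(f_m)\le d$ forces $N\le d$.

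To conclude, since $P$ is finite and $f$ takes values in the compact sphere $\BS^n$, the classical removable-singularity theorem for bounded quasiregular maps extends $f$ to a $K$-quasiregular map on all of $\BS^n$, and $\deg(f)\le d$ follows from upper semicontinuity of the local degree under locally uniform convergence (any deficit being absorbed by the bubbles at points of $P$). The main technical obstacle is the bubble construction in the second step: one must ensure that the rescaled limit $g$ is genuinely nonconstant -- which is precisely the content of the quasiregular Zalcman--Miniowitz lemma -- and that it captures a full unit of degree for $f_m$, so that the bubble count is bounded by the available degree $d$. Everything else combines closure properties of the $K$-quasiregular class with standard removability results.
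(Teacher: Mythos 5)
Your approach is genuinely different from the paper's. The paper argues combinatorially: it fixes coverings of $\BS^n$ by round balls $\CB_\ell$ of radius $2^{-\ell}$ with a bounded-overlap property, shows by a volume-counting argument that for each $m$ and $\ell$ at most $O(d)$ of these balls can be mapped by $f_m$ onto the complement of a ball of radius $\delta$, stabilises this exceptional collection along a subsequence, and takes $P$ to be the Hausdorff limit of the exceptional balls as $\ell\to\infty$. Away from $P$ the images $f_m(B)$ omit a fixed ball, which gives equicontinuity directly via \cite[Cor.\ III.2.7]{Rickman-book}, and closure of the class of $K$-quasiregular maps under locally uniform limits finishes the construction. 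Then it proves $f$ is at most $d$-to-one and only afterwards invokes \cite[Cor.\ III.2.11]{Rickman-book} to extend across $P$, and finally bounds $|P|$ by a separate counting argument. You replace all the counting by the Miniowitz--Zalcman rescaling lemma together with Rickman's Picard theorem, bounding the number of non-normality points by $d$ through a parallel bubbling argument. Both routes are reasonable; the paper's is more elementary and self-contained, while yours is the familiar blow-up analysis and would be the expected approach for someone coming from conformal or geometric compactness arguments.

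There is, however, a genuine gap in your extension step. You claim that ``since $P$ is finite and $f$ takes values in the compact sphere $\BS^n$, the classical removable-singularity theorem for bounded quasiregular maps extends $f$.'' For sphere-valued (quasimeromorphic) maps, boundedness is vacuous and does \emph{not} imply removability of isolated singularities: the map $z\mapsto e^{1/z}$ is $1$-quasiregular on $\BC^\ast$ and trivially ``bounded'' as a map into $\hat\BC\cong\BS^2$, yet has an essential singularity at the origin and takes almost every value infinitely often there. The removability theorem you invoke requires boundedness into $\BR^n$ (equivalently, that $f$ omits a neighbourhood of some point near $P$), which is precisely what is not automatic. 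What is actually needed --- and what the paper establishes before invoking \cite[Cor.\ III.2.11]{Rickman-book} --- is that $f$ has bounded multiplicity on $\BS^n\setminus P$: being at most $d$-to-one, $f$ cannot have an essential singularity at any $p\in P$ by the big Picard theorem for quasiregular mappings, and then it extends. Your closing remark about upper semicontinuity of local degree is exactly the ingredient that yields this multiplicity bound (if a generic $y_0$ has $k$ distinct $f$-preimages off $P$, then $f_m$ has at least $k$ preimages near them for large $m$, so $k\le d$), but in your write-up it appears only \emph{after} you have already extended $f$. The order must be reversed, and the removability hypothesis stated correctly, for the argument to close.
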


\begin{bem}
The Besson-Courtois-Gallot barycentric extension is closely related to the so-called Douady-Earle extension \cite{Douady-Earle}. The latter is more commonly used in the field of quasiconformal analysis. See \cite{Hu-Muzician} for a recent application. See also the references therein. 
\end{bem}

The paper is organised as follows. After establishing some notation used through the paper, we prove Theorem \ref{harmonic} in Section \ref{sec-harmonic} assuming the properties (1), (2) and (3) above on the barycentric extension. In Section \ref{sec-quasiregular} we recall basic facts about quasiregular maps. Proposition \ref{prop-compactness} is proved in that section. In Section \ref{sec-BCG} we recall the construction and some basic properties of the barycentric extension, delaying the proof of (1), (2) and (3) to Section \ref{sec-meat}.  We conclude with a discussion of the analogue of Theorem \ref{harmonic} in dimension $n=1$.

\subsection*{Acknowledgements}
We thank Aimo Hinkkanen and Vlad Markovic for discussions on these topics.

\section*{Notation.}
We fix the notation that we are going to use in the sequel. First, we identify the $n$-sphere $\BS^n$ with the boundary at infinity $\D_\infty\BH^{n+1}$ of the hyperbolic space $\BH^{n+1}$ of one dimension higher. We denote then by $\bar\BH^{n+1}=\BH^{n+1}\cup\BS^n$ the compactification of hyperbolic space by its boundary at infinity. Given distinct points $x,y\in\bar\BH^{n+1}$, let $[x,y]$ be the unique (possibly infinite) geodesic arc in $\BH^{n+1}$ joining them. Let also
$$\pi_{[x,y]}:\bar\BH^{n+1}\to[x,y]$$
be the convex projection onto this arc. 

For each $x\in \BH^{n+1}$ and $\theta, \eta\in \BS^n=\D_\infty \BH^{n+1}$, we let $d_x(\theta,\eta)$ to be the angle between the unit vectors $n_\theta(x),n_\eta(x)\in T^1_x\BH^{n+1}$ with 
$$\lim_{t\to\infty}\exp_x(t\cdot n_\theta(x))=\theta\text{ and }\lim_{t\to\infty}\exp_x(t\cdot n_\eta(x))=\eta.$$
This determines, at each point $x\in\BH^{n+1}$, a metric $d_x$ on $\BS^n$. We then denote by $\diam_x(X)$ the $d_x$-diameter of a set $X\subset\BS^n$. Also, we let $\vol_x$ be the unique probability measure on $\BS^n$ (that is, $\vol_x(\BS^n)=1$) 
invariant under the isometries of $(\BS^n,d_x)$. The distance $d_x$ and the related notions $\diam_x$ and $\vol_x$ behave well with respect to the group of isometries of $\BH^{n+1}$ in the sense that 
$$d_{\phi(x)}(\phi(\theta),\phi(\eta))=d_x(\theta,\eta)$$
for all $\phi\in\Isom(\BH^{n+1})$, all $x\in\BH^{n+1}$, and all $\theta$ and $\eta$ in $\BS^n$. It follows that all the metrics $d_x$ are in the same conformal class.

We fix a base point $o\in\BH^{n+1}$, which we imagine to be the origin in $\BR^{n+1}$ when we consider $\BH^{n+1}$ in the Poincar\'e model. In this model the sphere $\BS^n=\D_\infty\BH^{n+1}$ is, when endowed with the distance $d_o$, nothing but the round unit sphere $\BS^n$. The volume is thus, up to normalisation, just the spherical volume. To relax notation we will, unless it could cause confusion, drop the subscript $o$ when working with the metric associated to the base point, that is
\[
d(\theta,\eta)=d_o(\theta,\eta),\ \diam(X)=\diam_o(X)\text{, and }\vol(X)=\vol_o(X).
\]
We refer to metric balls $B=\{\eta \in \BS^n \vert d(\eta,\theta)<R\}\subset\BS^n$ with respect to $d=d_o$ as {\em round balls}. Given such a ball $B$ and a positive parameter $\lambda$, let $\lambda B=\{\eta \in \BS^n \vert d(\eta,\theta)<\lambda R\}$ be the round ball with same center and with radius scaled by $\lambda$. Incidentally, round balls are also metric balls, with different center and radius, with respect to all the other metrics $d_x$, but we stress that we reserve the adjective {\em round} to refer to the standard metric $d=d_o$.

Under a {\em round annulus} we understand an annulus $A\subset\BS^n$ of the form
\[
A=\{\eta\in\BS^n\vert r<d(\eta,\theta)<R\}\subset\BS^{n+1}
\]
for some $\theta\in\BS^{n+1}$ and $0<r<R<\pi$. Under the {\em dome of a round ball $B$} we understand nothing other than the convex hull of the said ball:
\[
\Dome(B)=\text{convex hull in }\BH^{n+1}\text{ of }B.
\]
The dome $\Dome(A) \subset \BH^{n+1}$ of a round annulus $A \subset \BS^n$ is defined similarly. 

Now, if $B\subset\BS^n$ is a round ball centered at $\theta$ of diameter $\diam B<\pi$ then we say that the {\em center of the boundary of the dome of $B$} is the point 
\[
\Center(\D\Dome(B))=\D\Dome(B)\cap[o,\theta],
\]
where the geodesic $[o,\theta]$ connecting $o$ and $\theta$ meets the boundary of the dome of $B$.

We will also encounter other annuli as well besides the round ones. In fact, if we are given a pair of distinct points $x,y\in\BH^n$ we denote by 
\begin{equation}\label{eq-conformal annulus}
A(x,y)=\{\theta\in\BS^n\vert\pi_{[x,y]}(\theta)\in(x,y)\}
\end{equation}
the set of points in the boundary at infinity which project to an interior point of the interval $[x,y]$. Note that $A(x,y)$ is a round annulus if and only if $o$ belongs to the geodesic containing the segment $[x,y]$.

\section{Harmonic extension}\label{sec-harmonic}

In Section \ref{sec-BCG} we recall the construction of Besson, Courtois, and Gallot's {\em barycentric extension} 
$$F_f:\BH^{n+1}\to\BH^{n+1}$$
of a non-constant quasiregular map $f:\BS^n\to\BS^n$. The map $F_f$ is smooth and extends $f$ continuously in the sense that for all $\theta\in\BS^n$ and all sequences $(x_m)$ in $\BH^{n+1}$ converging to $\theta$ we have
$$f(\theta)=\lim_{m\to\infty}F_f(x_m).$$
In Section \ref{sec-meat} we prove the following 3 facts about such barycentric extensions.

\begin{prop}\label{prop-lipschitz}
Let $F_f:\BH^{n+1}\to\BH^{n+1}$ be the barycentric extension of a non-constant $K$-quasiregular map $f:\BS^{n}\to\BS^{n}$ of degree $d\ge 1$. Then there is $L=L(K,d,n)>0$ satisfying 
$$\norm{DF_f(x)}, \norm{D^2F_f(x)}\le L$$
for all $x\in\BH^{n+1}$. Here $\Vert\cdot\Vert$ stands for the operator norm induced by the hyperbolic metric.
\end{prop}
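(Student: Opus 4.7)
The strategy is to combine the implicit characterisation of the barycentric extension with the compactness statement of Proposition \ref{prop-compactness}. Recall that $F_f(x)$ is the unique point $y \in \BH^{n+1}$ at which a vector-valued integral of the form
$$E(y,x) := \int_{\BS^n} \Phi(y,\theta)\, d(f_*\vol_x)(\theta) = 0$$
vanishes, where $\Phi(y,\theta)$ is the gradient at $y$ of the Busemann function towards $\theta$, a smooth and uniformly bounded object. Since the construction is equivariant under $\Isom(\BH^{n+1})$ and since $K$-quasiregularity and the degree are invariant under pre- and post-composition with isometries (which act on $\BS^n$ by M\"obius transformations), it suffices to bound $\norm{DF_f(o)}$ and $\norm{D^2F_f(o)}$ after normalising, say, $F_f(o)=o$. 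The implicit function theorem then expresses $DF_f(o)$ as $-H^{-1}\,\partial_x E$ and $D^2F_f(o)$ as a rational combination of $H^{-1}$ with derivatives of $E$ up to order two, where $H=\partial_y E(o,o)$. Because $\Phi$ and its derivatives are universally bounded and $f_*\vol_x$ is a probability measure, the numerators appearing in these formulas are a priori controlled. The entire proposition thus reduces to a uniform lower bound $H\ge c\cdot\Id$ with $c=c(K,d,n)>0$.

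Such a bound is precisely a non-concentration statement for the measure $f_*\vol_o$: the Hessian $H$ degenerates only when this measure collapses onto a proper sub-sphere of $\BS^n$, and in particular only when a tiny round ball carries almost all of its mass. I would establish the required uniform non-concentration by a compactness argument. Suppose, towards a contradiction, that there exist $K$-quasiregular maps $f_m\colon\BS^n\to\BS^n$ of degree at most $d$ along which $H_m\to 0$. After normalising by pre- and post-composition with isometries of $\BH^{n+1}$, we may assume $F_{f_m}(o)=o$. Proposition \ref{prop-compactness} produces a subsequence converging locally uniformly on $\BS^n\setminus P$ to a non-constant $K$-quasiregular map $f_\infty$ of degree at most $d$, for some finite set $P\subset\BS^n$ with $\vert P\vert\le d$. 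Since $P$ has zero visual volume and the integrands in $H_m$ are bounded and continuous, the pushforwards $(f_m)_*\vol_o$ converge weakly to $(f_\infty)_*\vol_o$, and hence $H_m\to H_\infty$. It then remains to check that $H_\infty$ is positive definite, which follows from the fact that $f_\infty$ is again $K$-quasiregular of degree at most $d$: by standard distortion estimates, the visual volume of $f_\infty^{-1}(B)$ tends to zero with the radius of any round ball $B\subset\BS^n$, uniformly along maps in this class, so $(f_\infty)_*\vol_o$ does not concentrate. This contradicts $H_m\to H_\infty=0$ and yields the bound on $DF_f$. The second derivative bound is obtained analogously, with derivatives of $E$ of one higher order entering the implicit differentiation.

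The main obstacle is ensuring that the uniform non-concentration survives the passage to the limit: the sequence $(f_m)$ only converges off a finite exceptional set $P$, and one must rule out that mass of $(f_m)_*\vol_o$ escapes towards points of $P$ in the limit, producing a near-atomic measure that would degenerate $H_\infty$. This requires two ingredients: that $\vert P\vert\le d$ is finite (hence $\vol_o$-null), supplied by Proposition \ref{prop-compactness}, and quantitative distortion estimates for bounded-degree $K$-quasiregular maps that control the maximal $\vol_o$-mass a small round ball can absorb under $f_m$ uniformly in $m$. The rest of the argument is a relatively routine application of the implicit function theorem to the barycentric defining equation.
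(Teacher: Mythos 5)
Your proof takes essentially the same route as the paper: normalize by isometries so that $x=o$ and $F_f(o)=o$, use compactness of $K$-quasiregular maps of bounded degree to pass to a non-constant limit, and conclude via the implicit-function formula for $DF_f$. The paper runs this as a single contradiction with $\norm{DF_{f_m}(o)}\to\infty$, invoking dominated convergence of $G_{f_m}$ and its derivatives to those of $G_{f_\infty}$; you isolate the operative quantity more explicitly, noting that the ``numerators'' in $DF_f=-H^{-1}\partial_x E$ are uniformly bounded and that all the work is a uniform lower bound on the Hessian $H$. That observation is correct and is implicitly what the paper uses; both arguments are the same in substance.

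One small imprecision: you assert that Proposition~\ref{prop-compactness} yields a \emph{non-constant} limit $f_\infty$, but that proposition alone does not rule out a constant (degree~$0$) limit. What does rule it out is the normalization $F_{f_m}(o)=o$ together with the gravity principle (Lemma~\ref{lem:china}): if $(f_m)_*\vol_o$ collapsed to a Dirac mass, the barycenters $F_{f_m}(o)$ would run off to the boundary, contradicting $F_{f_m}(o)=o$. This is exactly the content of Proposition~\ref{key-lemma} in the paper, which you should cite (or reprove) in place of the bare Proposition~\ref{prop-compactness} to close the argument.
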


\begin{prop}\label{prop-volume}
Let $F_f:\BH^{n+1}\to\BH^{n+1}$ be the barycentric extension of a non-constant $K$-quasiregular map $f:\BS^n\to\BS^n$ of degree $d\ge 1$. For every $\eta>0$ there are $\delta=\delta(\eta,K,d,n)>0$ and $R_0=R_0(\eta,K,d,n)>0$ having the following property: for every $x\in\BH^{n+1}$ and every round ball $B\subset\BS^n$ with diameter $\diam_{F_f(x)}(B)\le\delta$ we have
$$\vol_x\left(\left\{v\in T^1_x\BH^{n+1}\middle\vert F_f(\exp_x(Rv))\in\Dome(B)\right\}\right)<\eta$$
for all $R\ge R_0$. 
\end{prop}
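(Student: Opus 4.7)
The plan is to argue by contradiction, normalising by isometries and extracting subsequential limits via Proposition \ref{prop-compactness}. Suppose the conclusion fails for some $\eta>0$: then we obtain sequences $x_m\in\BH^{n+1}$, $R_m\to\infty$, and round balls $B_m\subset\BS^n$ with $\diam_{F_f(x_m)}(B_m)\to 0$ such that the set $\Omega_m=\{v\in T^1_{x_m}\BH^{n+1}: F_f(\exp_{x_m}(R_mv))\in\Dome(B_m)\}$ satisfies $\vol_{x_m}(\Omega_m)\ge\eta$. Pick $\phi_m,\psi_m\in\Isom(\BH^{n+1})$ with $\phi_m(x_m)=o$ and $\psi_m(F_f(x_m))=o$, and set $g_m=\psi_m\circ f\circ\phi_m^{-1}$, a $K$-quasiregular self-map of $\BS^n$ of degree $d$. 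By the equivariance of the barycentric extension, $F_{g_m}=\psi_m\circ F_f\circ\phi_m^{-1}$, so $F_{g_m}(o)=o$. Isometry-invariance of the conformal metrics $d_x$ gives $\diam_o(\psi_m(B_m))=\diam_{F_f(x_m)}(B_m)\to 0$, so after enlarging $\psi_m(B_m)$ to a round ball $\tilde B_m$ of comparable $d_o$-diameter (which only enlarges the dome, preserving the bad condition) we may assume $\tilde B_m$ is round with centre $c_m\to c\in\BS^n$ and $\diam_o(\tilde B_m)\to 0$, and that the transported set $\tilde\Omega_m=d\phi_m(\Omega_m)\subset T^1_o\BH^{n+1}$ satisfies $\vol_o(\tilde\Omega_m)\ge\eta$ and $F_{g_m}(\exp_o(R_m v))\in\Dome(\tilde B_m)$ for every $v\in\tilde\Omega_m$.

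By Proposition \ref{prop-compactness} pass to a further subsequence with $g_m\to g$ locally uniformly on $\BS^n\setminus P$, where $|P|\le d$ and $g$ is $K$-quasiregular of degree at most $d$. The key observation is that $g$ cannot be identically $c$: indeed, if $g\equiv c$, then the push-forward measures $(g_m)_*\vol_o$ converge weakly to the Dirac mass $\delta_c$, and since the barycentric extension is constructed so that $F_{g_m}(o)$ is the barycentre of $(g_m)_*\vol_o$ (as recalled in Section \ref{sec-BCG}), this would force $F_{g_m}(o)\to c\in\BS^n$, contradicting $F_{g_m}(o)=o$. Hence $g^{-1}(c)$ is either empty or a discrete subset of $\BS^n$, so $E=g^{-1}(c)\cup P$ has $\vol_o$-measure zero.

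Since $\vol_o(\tilde\Omega_m)\ge\eta$ in the probability space $(\BS^n,\vol_o)$, the reverse Fatou inequality gives $\vol_o(\limsup_m\tilde\Omega_m)\ge\eta$, and $\vol_o(E)=0$ then supplies a direction $v_\infty\in\BS^n\setminus E$ lying in $\tilde\Omega_m$ for infinitely many $m$. Restrict to such $m$ and set $y_m=\exp_o(R_m v_\infty)$, so $y_m\to v_\infty\in\BS^n\setminus P$ in $\bar\BH^{n+1}$ while $F_{g_m}(y_m)\in\Dome(\tilde B_m)$. Since the sets $\Dome(\tilde B_m)$ collapse to $\{c\}$ in $\bar\BH^{n+1}$, we deduce $F_{g_m}(y_m)\to c$; the desired contradiction will arise from the opposite limit $F_{g_m}(y_m)\to g(v_\infty)\ne c$. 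Making this last step rigorous is the main obstacle: what is required is a joint boundary-continuity statement saying that if $g_m\to g$ uniformly on a neighbourhood of $v_\infty\in\BS^n\setminus P$ and $y_m\to v_\infty$ in $\bar\BH^{n+1}$, then $F_{g_m}(y_m)\to g(v_\infty)$. I would prove it from the description of $F_{g_m}(y_m)$ as the barycentre of the push-forward via $g_m$ of the visual probability measure $\vol_{y_m}$: as $y_m\to v_\infty$ these visual measures concentrate on a neighbourhood of $v_\infty$ on which $g_m\to g$ uniformly, so $(g_m)_*\vol_{y_m}\to\delta_{g(v_\infty)}$ weakly, and the uniform Lipschitz and $C^2$ control from Proposition \ref{prop-lipschitz} handles any remaining equicontinuity required to pass this weak convergence into convergence of the barycentres.
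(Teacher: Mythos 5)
Your proof is correct in its main outline, but it takes a genuinely different route from the paper's. After the same normalisation $F_{g_m}(o)=o$ and the same appeal to compactness (the paper uses Proposition~\ref{key-lemma}, which is exactly the conclusion you rederive inline, namely that the limit $g$ is non-constant with $F_g(o)=o$), the paper derives the contradiction \emph{globally}: it shows that $v\mapsto F_{g_m}(\exp_o(R_m v))$ converges uniformly to $g$ off a small cone neighbourhood $U$ of $P$, then uses that $g$ is at most $d$-to-one to argue that the $g_m$-preimage of any sufficiently small ball, restricted to $\BS^n\setminus U$, has volume $<\eta/2$, which together with $\vol(U)<\eta/2$ kills the hypothesis $\vol_o(\tilde\Omega_m)\ge\eta$. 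You instead argue \emph{pointwise}: reverse Fatou gives $\vol_o(\limsup\tilde\Omega_m)\ge\eta$, you locate a single direction $v_\infty\notin g^{-1}(c)\cup P$ lying in infinitely many $\tilde\Omega_m$, and you get a contradiction from the two incompatible limits $F_{g_m}(\exp_o(R_m v_\infty))\to c$ (collapse of the domes) and $F_{g_m}(\exp_o(R_m v_\infty))\to g(v_\infty)\ne c$ (joint boundary continuity). The pointwise route is arguably the lighter one: it does not require the uniform ``small-ball-has-small-preimage'' estimate that the paper gestures at, only that $g^{-1}(c)$ has measure zero, which is immediate from discreteness of non-constant quasiregular maps.

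Two small corrections. First, the only justification you should invoke in the last step is the gravity principle (Lemma~\ref{lem:china}), not Proposition~\ref{prop-lipschitz}. The barycentre functional is not continuous at Dirac masses in $\bar\BH^{n+1}$, and Lipschitz/$C^2$ bounds on $F_f$ concern derivatives in the interior, not boundary behaviour of $\BCG$; what does the job is precisely: if $(g_m)_*\vol_{y_m}$ assigns mass $>2/3$ to a small round ball $W$ around $g(v_\infty)$ (which follows from $y_m\to v_\infty$, concentration of $\vol_{y_m}$ near $v_\infty$, and uniform convergence $g_m\to g$ on a fixed neighbourhood of $v_\infty$), then $F_{g_m}(y_m)$ lies within distance $1$ of $\Dome(W)$, and shrinking $W$ yields $F_{g_m}(y_m)\to g(v_\infty)$ in $\bar\BH^{n+1}$. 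Second, the ``enlarging $\psi_m(B_m)$ to a round ball'' step is unnecessary: the boundary action of an isometry of $\BH^{n+1}$ is a M\"obius transformation of $\BS^n$ and hence carries round balls to round balls (with different $d_o$-centre and radius), so $\psi_m(B_m)$ is already round and $\psi_m(\Dome(B_m))=\Dome(\psi_m(B_m))$ exactly.
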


\begin{prop}\label{prop-radialQI}
Let $F_f:\BH^{n+1}\to\BH^{n+1}$ be the barycentric extension of a non-constant $K$-quasiregular map $f:\BS^n\to\BS^n$ of degree $d\ge 1$. There is $c_0=c_0(n,K,d)>0$ such that for each $\varepsilon>0$ there is $R_0=R_0(n,K,d,\varepsilon)$ such that the set
$$Q_x = \{ v\in T^1_x\BH^n \vert d_{\BH^n}(F_f(x), F_f(\exp_x(Rv)) \ge c_0 R - c_0\ \text{for\ all}\ R\ge R_0\}$$
has measure $\vol_x(Q_x)\ge 1-\varepsilon$ for each $x\in \BH^n$.
\end{prop}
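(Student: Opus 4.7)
The plan is to combine the three tools available in the local theory: the Lipschitz control from Proposition \ref{prop-lipschitz}, the non-concentration estimate from Proposition \ref{prop-volume}, and the compactness result from Proposition \ref{prop-compactness}.

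First I would normalise. Since all the relevant quantities $d_{\BH^{n+1}}$, $\exp_x$ and $\vol_x$ are equivariant under $\Isom(\BH^{n+1})$, and the barycentric extension transforms predictably under post-composition and pre-composition by isometries, I would replace $f$ by a normalised representative so that $x=o$ and $F_f(o)=o$. This reduction is harmless and brings the entire analysis to a single base point.

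Next I would discretise the parameter $R$. The bad set is
\[
T^1_o\BH^{n+1}\setminus Q_o \;=\; \bigcup_{R\ge R_0}\bigl\{v:d(o,F_f(\exp_o(Rv)))<c_0R-c_0\bigr\},
\]
a \emph{continuous} union. Using the Lipschitz constant $L$ of $F_f$ supplied by Proposition \ref{prop-lipschitz}, the value $d(o,F_f(\exp_o(Rv)))$ changes by at most $L$ when $R$ varies by $1$, so at the cost of replacing $c_0$ by a slightly larger constant this uncountable union reduces to a countable union indexed by integer scales $R_k=R_0+k$. At each scale $R_k$ the corresponding set is just the radial preimage of the ball $B^{hyp}(o,c_0R_k+L)\subset\BH^{n+1}$.

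I would then control each such preimage by splitting $B^{hyp}(o,c_0R_k+L)$ into a fixed \emph{core} $B^{hyp}(o,\rho_0)$ and an \emph{outer shell} $B^{hyp}(o,c_0R_k+L)\setminus B^{hyp}(o,\rho_0)$. With $\rho_0$ tuned to match $\log(1/\delta_0)$ for some $\delta_0$ provided by Proposition \ref{prop-volume}, the shell is covered by the domes of a finite collection of round balls of $d_o$-diameter $\delta_0$ covering $\BS^n$, and Proposition \ref{prop-volume} bounds the radial preimage of each such dome. For the core, the crucial input is the pointwise convergence $F_f(\exp_o(Rv))\to f(\theta_v)\in\BS^n$ valid for every $v$; by dominated convergence the measure of $\{v:F_f(\exp_o(Rv))\in B^{hyp}(o,\rho_0)\}$ tends to $0$ as $R\to\infty$, and the uniformity of this convergence across the family of normalised $K$-quasiregular maps of degree at most $d$ is ensured by Proposition \ref{prop-compactness} combined with Arzelà--Ascoli and the $C^{2}$-bound of Proposition \ref{prop-lipschitz}.

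I expect the main obstacle to be the \emph{summability} of the per-scale estimates. Proposition \ref{prop-volume} provides only a qualitative relation between the dome size $\delta$ and the bound $\eta$, while the number of domes needed to cover the shell grows like $\delta^{-n}$. A naive union bound over $k$ therefore gives a divergent series, and the proof must instead tune $\delta_0$ and $\rho_0$ as functions of $k$ so that the shrinking dome bound compensates for the growing covering number and delivers a geometric series summing to less than $\varepsilon$. Executing this balancing act, while simultaneously making the core estimate quantitative via the compactness input, is where the bulk of the technical work lies; once it is in place, assembling the pieces yields $\vol_o(T^1_o\BH^{n+1}\setminus Q_o)<\varepsilon$ with $c_0=c_0(n,K,d)$ and $R_0=R_0(n,K,d,\varepsilon)$, as required.
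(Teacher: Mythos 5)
The proposal takes a genuinely different route from the paper, and it contains a gap that is not a mere technical nuisance but a structural obstruction. The paper's argument never invokes Proposition \ref{prop-volume}. Instead it uses compactness through Lemma \ref{lem:balls} to produce, after normalisation, a finite collection of round balls $B_i$ of diameter bounded below by $\delta(\epsilon,K,d,n)$, with $\vol(\bigcup_i\frac1{100}B_i)\ge 1-\frac12\epsilon$, on each of which $f$ restricts to a quasiconformal map. The key quantitative input is then Lemma \ref{blahblahblah}, built on the distance--modulus correspondence (Lemmas \ref{lem:distance-modulus} and \ref{distance-modulus}), the quasisymmetry estimate of Lemma \ref{modulus-nicerlemma}, and the gravity principle of Lemma \ref{lem:china}: for $x_1=\Center(\D\Dome(B_i))$ and $x_2\in\Dome(\frac1{100}B_i)$ one has $d(F_f(x_1),F_f(x_2))\ge C\,d(x_1,x_2)-c$. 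Since for $R$ large a set of directions of measure $\ge 1-\epsilon$ lands $\exp_o(Rv)$ in some $\Dome(\frac1{100}B_i)$, the linear lower bound follows immediately. The quasiregularity of $f$ enters exactly here, through the quasisymmetry of $f$ on balls away from the branch set.

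Your proposal cannot produce the \emph{linear} rate of escape, which is precisely what the proposition asserts. Your core estimate only shows, for a \emph{fixed} radius $\rho_0$, that $\vol\{v:F_f(\exp_o(Rv))\in\BB(o,\rho_0)\}\to0$ as $R\to\infty$; what is needed is a bound on $\vol\{v:F_f(\exp_o(Rv))\in\BB(o,c_0R-c_0)\}$, a ball whose radius grows with $R$, and none of Propositions \ref{prop-lipschitz}, \ref{prop-volume}, or \ref{prop-compactness} quantifies the speed at which $F_f(\exp_o(Rv))$ escapes to infinity. The shell estimate has a second, sharper problem: the dome $\Dome(B)$ of a round ball of small $d_o$-diameter is a half-space of $\BH^{n+1}$ extending all the way to the boundary at infinity, so the domes of a covering of $\BS^n$ by $\delta_0$-balls cover not the shell but the \emph{entire complement} of $\BB(o,\rho_0)$. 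The union bound is therefore over the set of all $v$ with $F_f(\exp_o(R_kv))$ far from $o$, which for large $R_k$ has measure close to $1$, not close to $0$; in fact this forces $\eta\gtrsim\delta^n$ in any valid instance of Proposition \ref{prop-volume}, so the ``balancing act'' you flag as the main obstacle is not fixable by tuning $\delta_0$ and $\rho_0$ against $k$. The missing ingredient is the distance--modulus mechanism applied through the local quasiconformality of $f$, and the proposition cannot be recovered from the three soft inputs you cite alone.
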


For the convenience of the reader we summarise briefly the content of these three propositions. Fix $K,d$ and $n$.
\begin{itemize}
\item Proposition \ref{prop-lipschitz} asserts that the maps $F_f$ are uniformly Lipschitz and have uniformly bounded second derivative.
\item Proposition \ref{prop-volume} states that the maps $F_f$ do not map sets of large visual volume to sets of small visual diameter. 
\item Finally, Proposition \ref{prop-radialQI} asserts that the maps $F_f$ behave like quasi-isometries in most directions.
\end{itemize}

Assuming these three proposition for the time being, we prove Theorem \ref{harmonic}. We rely heavily on the work of Benoist and Hulin \cite{Benoist-Hulin} and we highly recommend to the reader to first read that beautifully written paper.

\begin{proof}[Proof of Theorem \ref{harmonic}]
We start by fixing
\[
c=\max\left\{L,\frac 1{c_0}\right\},
\]
where $L$ comes from Proposition \ref{prop-lipschitz} and $c_0$ from Proposition \ref{prop-radialQI}. To avoid surcharging notation, let 
$$F=F_f:\BH^{n+1}\to\BH^{n+1}$$
be the barycentric extension of $f$. Also, we continue denoting by $o$ the base point of $\BH^{n+1}$. For $x\in\BH^{n+1}$ and $r>0$, let  
\[
\BB(x,r)=\{z\in\BH^{n+1}\vert d_{\BH^{n+1}}(z,x)\le r\}
\]
be the closed hyperbolic ball with center $x$ and radius $r$ and let $S(x,r)=\D\BB(x,r)$ the sphere with the same center and radius.

Consider for $R>0$ the restriction of $F$ to the sphere $S(o,R)$ 
and let 
$$h_R:\BB(o,R)\to\BH^{n+1}$$
be the solution on the ball $\BB(o,R)$ of the Dirichlet problem $h_R=F$ on $S(o,R)$. That is, $h_R$ is continuous on the closed ball, harmonic in the interior, and agrees with $F$ on the boundary. We show that the harmonic map $h_R$ is at uniformly bounded distance from the original map $F\vert_{\BB(o,R)}$:

\begin{claim}
There exists $\rho>0$ for which  
$$\rho_R=\max_{x\in\BB(o,R)}d_{\BH^{n+1}}(F(x),h_R(x))<\rho$$ 
for all $R>0$.
\end{claim}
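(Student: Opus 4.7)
The plan is to argue by contradiction in the style of Benoist--Hulin. Suppose $\rho_{R_k}\to\infty$ along some sequence $R_k\to\infty$, and let $x_k\in\BB(o,R_k)$ be a point at which the maximum is attained; the point $x_k$ is interior since $h_{R_k}=F$ on $S(o,R_k)$ where the distance vanishes. I normalize by pre-composing with isometries $\phi_k\in\Isom(\BH^{n+1})$ sending $o$ to $x_k$ and post-composing with isometries $\psi_k\in\Isom(\BH^{n+1})$ sending $F(x_k)$ to $o$, and set
$$\tilde F_k=\psi_k\circ F\circ\phi_k,\qquad \tilde h_k=\psi_k\circ h_{R_k}\circ\phi_k.$$
Then $\tilde F_k(o)=o$, $d_{\BH^{n+1}}(\tilde h_k(o),o)=\rho_{R_k}\to\infty$, and the inequality $d_{\BH^{n+1}}(\tilde F_k(y),\tilde h_k(y))\le\rho_{R_k}$ holds for every $y$ in the source domain $\phi_k^{-1}(\BB(o,R_k))$, which exhausts $\BH^{n+1}$ as $k\to\infty$.

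The boundary maps of $\tilde F_k$ are $\psi_k|_{\BS^n}\circ f\circ\phi_k|_{\BS^n}$, i.e.\ Möbius conjugates of $f$, and hence $K$-quasiregular of degree $d$. Proposition~\ref{prop-compactness} therefore yields a subsequence whose boundary maps converge locally uniformly off a finite set $P\subset\BS^n$ of cardinality at most $d$ to a $K$-quasiregular map $f_\infty$ of degree at most $d$. Combined with the uniform Lipschitz and $C^2$ bounds from Proposition~\ref{prop-lipschitz} and a standard Arzelà--Ascoli extraction, this upgrades to locally uniform convergence $\tilde F_k\to F_\infty$ on $\BH^{n+1}$, where $F_\infty$ is non-constant (since $F_\infty(o)=o\in\BH^{n+1}$) and equals the barycentric extension of $f_\infty$ (using that the Besson--Courtois--Gallot construction depends continuously on its boundary data in this sense).

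The task is then to bound $\tilde h_k(o)$ independently of $k$, which will contradict $d_{\BH^{n+1}}(\tilde h_k(o),o)=\rho_{R_k}\to\infty$. This is where Propositions~\ref{prop-radialQI} and~\ref{prop-volume} enter. Proposition~\ref{prop-radialQI} applied at $x_k$ forces, on a set of unit directions of visual measure at least $1-\varepsilon$, the radial geodesic from $x_k$ to reach $S(o,R_k)$ at a point where $F$ lies at distance $\ge c_0 R_k-c_0$ from $F(x_k)$, thereby pinning down the behavior of $\tilde h_k|_{\phi_k^{-1}(S(o,R_k))}=\tilde F_k|_{\phi_k^{-1}(S(o,R_k))}$ in most visual directions at $o$. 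Proposition~\ref{prop-volume} simultaneously prevents the boundary values of $F$ from concentrating in any small visual region around $F(x_k)$ as viewed from $x_k$. Since $h_{R_k}(x_k)$ is the CAT$(-1)$ harmonic-measure barycenter of $F|_{S(o,R_k)}$, and as $R_k\to\infty$ the push-forward of the harmonic measure from $x_k$ to $\BS^n$ tends to the visual measure $\vol_{x_k}$, these two constraints together pin $h_{R_k}(x_k)$ to a bounded neighbourhood of the barycentric value $F(x_k)=F_f(x_k)$.

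The main obstacle is precisely this last step: quantifying the convergence of the harmonic barycenter at $x_k$ to the visual/barycentric value $F(x_k)$, and deducing from it a uniform upper bound on $d_{\BH^{n+1}}(h_{R_k}(x_k),F(x_k))$ in terms only of $K$, $d$, $n$ and the constants appearing in Propositions~\ref{prop-lipschitz}--\ref{prop-radialQI}. This is the delicate argument that Benoist and Hulin carry out in their quasi-isometric setting; here it is transported to the barycentric-extension setting, with the three propositions serving as the precise substitutes for the quasi-isometric control they exploit.
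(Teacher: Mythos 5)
The paper's proof of the claim does not take the normalization/compactness route you propose; instead it follows Benoist--Hulin's Sections~4.1--4.4 very closely at the scale $r_R=\rho_R^{1/3}$, writing $F$ and $h_R$ in polar coordinates around $y_R=F(x_R)$, defining the sets $U_R$, $V_R$, obtaining volume estimates via subharmonicity of $\rho_h$ and Green's theorem (Prop.~\ref{prop-lipschitz} replacing the upper QI bound), then defining $U_R'=U_R\cap Q_{x_R}$ via Prop.~\ref{prop-radialQI}, running hyperbolic trigonometry to show that on $V_R\cap U_R'$ the directions $v_F(\exp_{x_R}(r_Rv))$ concentrate near $v_h(x_R)$, and finally invoking Prop.~\ref{prop-volume} to get a contradiction because a set of uniformly positive visual measure would be sent into a set of small visual diameter.

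Your proposal stops at an outline and the critical step is missing. You write that Propositions~\ref{prop-radialQI} and~\ref{prop-volume} ``pin $h_{R_k}(x_k)$ to a bounded neighbourhood of the barycentric value $F(x_k)$,'' but you then acknowledge that quantifying this is ``the delicate argument that Benoist and Hulin carry out.'' That is exactly the content that a proof of the claim must supply, and it is nontrivial: it is precisely the polar-coordinate/hyperbolic-trigonometry argument the paper reproduces. Without it, you have identified the ingredients (Props.~\ref{prop-lipschitz}--\ref{prop-radialQI}) but not assembled them into a contradiction, so this is a genuine gap, not a proof.

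Two secondary issues. First, you describe $h_{R_k}(x_k)$ as ``the CAT$(-1)$ harmonic-measure barycenter of $F|_{S(o,R_k)}$.'' This conflates harmonic \emph{functions} (which satisfy the mean value property with respect to harmonic measure) with harmonic \emph{maps} into $\BH^{n+1}$, which solve a nonlinear PDE and are not given by any barycenter of boundary values; the only vestige of this intuition that survives is the subharmonicity of $\rho_h$, which is what the paper actually uses. Second, your assertion that $\phi_k^{-1}(\BB(o,R_k))$ exhausts $\BH^{n+1}$ presupposes $R_k-d(o,x_k)\to\infty$; this is true, but it is the boundary estimate (Prop.~3.8 of Benoist--Hulin, cited at the start of the paper's proof) and must be invoked explicitly, not assumed.

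Finally, while the normalization-and-limit idea is a reasonable reflex (it is how the paper proves Props.~\ref{prop-lipschitz}--\ref{prop-radialQI} themselves, via Prop.~\ref{key-lemma}), it is not obviously helpful for the claim: even after passing to a limit $\tilde F_k\to F_\infty$, the harmonic Dirichlet solutions $\tilde h_k$ are defined on exhausting domains with diverging boundary data relative to $o$, and controlling them in the limit is precisely the original problem. The scale-$r_R$ argument of Benoist--Hulin avoids this by working locally near $x_R$ where both $\rho_h$ and $v_h$ can be controlled quantitatively.
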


This claim is the analogue of Proposition 3.6 in \cite{Benoist-Hulin}. In fact, if we assume the validity of the claim, then we may follow word-by-word the argument in \cite[Proof of Theorem 1.1]{Benoist-Hulin} to prove that, up to passing to a subsequence, the maps $h_R$ converge uniformly on compacta to a harmonic map which stays at bounded distance of $F$, and which hence extends continuously to our given map $f:\BS^n\to\BS^n$.

In other words, all we have to do is to prove the claim. To do so, we follow the strategy of \cite{Benoist-Hulin} and argue by contradiction. Suppose thus that there are values for $R$ for which $\rho_R$ is arbitrarily large. Using the same notation as in \cite{Benoist-Hulin}, let $x_R\in\BB(o,R)$ be a point satisfying $d_{\BH^{n+1}}(F(x_R),h_R(x_R))=\rho_R$, and set 
$$r_R=\sqrt[3]{\rho_R}.$$ 
By the choice of $c$, the first and second derivatives of $F$ are bounded by $c$. Now, the boundary estimate given by Proposition 3.8 in \cite{Benoist-Hulin} implies that
\[
\BB(x_R,r_R)\subset\BB(0,R-1)
\]
as long as
\[
r_R \le \frac{\rho_R}{16c^2(n+1)}.
\]
Since $r_R = \sqrt[3]{\rho_R}$, we this bound holds if 
\begin{equation}\label{eq-rho-bound1}
\rho_R>(16(n+1))^{3/2}c^3.
\end{equation}
From now on we assume \eqref{eq-rho-bound1} and work only on this smaller ball $\BB(x_R,r_R)$.

As in \cite[Section 4.1]{Benoist-Hulin} we write the (restriction) to $\BB(x_R,r_R)$  of the maps $F$ and $h=h_R$ in polar coordinates centered at the point $y_R=F(x_R)$, that is 
\[
F(z)=\exp_{y_R}(\rho_f(z)\cdot v_f(z))\text{ where }\rho_f(z)\in\BR_+\text{ and }v_f(z)\in T^1_{y_R}\BH^{n+1},
\]
\[
h(z)=\exp_{y_R}(\rho_h(z)\cdot v_h(z))\text{ where }\rho_h(z)\in\BR_+\text{ and }v_h(z)\in T^1_{y_R}\BH^{n+1},
\]
and consider the sets
$$U_R=\left\{v\in T^1_{x_R}\BH^{n+1}\middle\vert \rho_h(\exp_{x_R}(r_R\cdot v))\ge\rho_R-\frac 1{2c}r_R\right\}\text{ and}$$
$$V_R=\left\{v\in T^1_{x_R}\BH^{n+1}\middle\vert \rho_h(\exp_{x_R}(t\cdot v)\ge\frac 12\rho_R\text{ for all }t\in[0,r_R]\right\}.$$

The main ingredient in computations in \cite[Section 4.2]{Benoist-Hulin} -- besides the generic facts like the subharmonicity of the function $\rho_R$ or Green's theorem -- is the upper coarse Lipschitz estimate for the quasi-isometry. Thus the same computations with the estimate in Proposition \ref{prop-lipschitz} yield, for our sets $U_R$ and $V_R$, the volume estimates
\begin{equation}\label{eq-BHvolumeestimate}
\vol_{x_R}(U_R) \ge\frac 1{3c^2}\quad\text{and}\quad
\vol_{x_R}(V_R) \ge 1-2^{12}(n+1)c\cdot\frac{r_R^2}{\rho_R}
\end{equation}
for radii $R$ satisfying \eqref{eq-rho-bound1}. 

Also, the arguments in \cite[Section 4.4]{Benoist-Hulin} go through without a problem, showing that the angle between $v_h(x_R)$ and $v_h(\exp_{x_R}(r_R\cdot v))$ is very small for $v$ in $V_R$. More precisely, we have
\begin{equation}\label{eq-angle-bound1}
\angle(v_h(x_R),v_h(\exp_{x_R}(r_R\cdot v)))\le \frac {8\rho_R^2}{\sinh(\rho_R/4)}\text{ for all }v\in V_R.
\end{equation}
Again, \eqref{eq-angle-bound1} is valid for all $R$ satisfying \eqref{eq-rho-bound1}.

As the alert reader might have noticed, we went from Section 4.2 in \cite{Benoist-Hulin} to Section 4.4, skipping thus Section 4.3. It is indeed in Section 4.3 of \cite{Benoist-Hulin} where the authors make use of the fact that, up to a bounded additive error, quasi-isometries expand distances linearly. In our setting, the map $F$ fails to have this property, and this is what we have to repair. 

What saves us is that, by Proposition \ref{prop-radialQI}, $F$ behaves like a radial isometry in most directions. More concretely, setting $\varepsilon=\frac 1{6c^2}$, let $c_0$, $R_0$ and the set 
$$Q_{x_R} =\{ v\in T^1_{x_R}\BH^n \vert d_{\BH^n}(F(x_R), F(\exp_{x_R}(rv)) \ge c_0\cdot r - c_0\ \text{for\ all}\ r\ge R_0\}$$
be as provided by the said proposition. Also, we are interested in the case that
\begin{equation}\label{eq-rho-bound2}
r_R=\rho_R^{\frac 13}>R_0.
\end{equation}
Now, for $R$ satisfying \eqref{eq-rho-bound1} and \eqref{eq-rho-bound2}, set also
$$U_R'=U_R\cap Q_{x_R},$$
and note that \eqref{eq-BHvolumeestimate} and the choice of $\epsilon$ imply that
\begin{equation}\label{Iamsickofthis}
\vol_x(U_R')\ge \frac 1{6c^2}.
\end{equation}
As in Lemma 4.5 in \cite{Benoist-Hulin}, consider for $v\in U_R'$ the triangle with vertices 
$$p_1=y_R,\ p_2=F(\exp_{x_R}(r_R\cdot v)),\text{ and }p_3=h(\exp_{x_R}(r_R\cdot v)),$$ 
and note that we have the following bounds for the lengths of its sides:
\begin{equation*}
\begin{split}
d(p_2,p_3)&\le \rho_R\text{ the by definition of }\rho_R,\\
 d(p_3,p_1)&\ge\rho_R-\frac 1{2c}r_R\text{ because }v\in U_R'\subset U_R,\text{ and}\\
 d(p_1,p_2)&\ge c_0\cdot r_R - c_0\text{ because }v\in U_R'\subset Q_{x_R}.
 \end{split}
 \end{equation*}
Hyperbolic trigonometry (see also \cite[Lemma 2.1]{Benoist-Hulin}) implies that the angle 
$$\angle(v_F(\exp_{x_R}(r_R\cdot v)),v_h(\exp_{x_R}(r_R\cdot v)))=\angle([p_1,p_2],[p_1,p_3])$$
is bounded by
$$\angle([p_1,p_2],[p_1,p_3])\le 4e^{-\frac{1}4(-d(p_2,p_3)+d(p_3,p_1)+d(p_1,p_2))}\le 4e^{-\frac 14
(-\frac 1{2c}r_R+c_0 r_R - c_0)}.$$
Taking into account that $c>\frac 1{c_0}$, we get
\begin{equation}\label{eq-angle-bound2}
\angle(v_F(\exp_{x_R}(r_R\cdot v)),v_h(\exp_{x_R}(r_R\cdot v)))\le  4e^{-\frac 14(\frac 1{2c}r_R - c_0)}
\end{equation}
for every $R$ satisfying \eqref{eq-rho-bound1} and \eqref{eq-rho-bound2}, and for every $v\in U_R'$.

Combining \eqref{eq-angle-bound1} and \eqref{eq-angle-bound2} we get 
\begin{equation}\label{eq-angle-bound3}
\angle(v_h(x_R),v_F(\exp_{x_R}(r_R\cdot v)))\le \frac {8\rho_R^2}{\sinh(\rho_R/4)}+4e^{-\frac 14(\frac 1{2c}r_R - c_2)}
\end{equation}
for all $R$ satisfying \eqref{eq-rho-bound1} and \eqref{eq-rho-bound2} and for all $v\in V_R\cap U_R'$. Since $r_R=\rho_R^{1/3}$, we have that the quantity on the right hand side of \eqref{eq-angle-bound3} tends to $0$ as $\rho_R \to \infty$. On the other hand, we have that 
\begin{equation}\label{eqlargevolume}
\vol_x(V_R\cap U_R')\ge\frac 1{6c^2}-2^{12}(n+1)c\cdot\frac{r_R^2}{\rho_R}
\end{equation}
by \eqref{eq-BHvolumeestimate} and \eqref{Iamsickofthis}.

Taken altogether this means that, if $\rho_R$ is very large, the set $V_R\cap U_R'$ of uniformly positive measure \eqref{eqlargevolume} is mapped by $v\mapsto F(\exp_{x_R}(R\cdot v))$ into a set with visually very small diameter \eqref{eq-angle-bound3}. Proposition \ref{prop-volume} asserts that this is not possible, proving that that $\rho_R$ is uniformly bounded from above. We have proved, with invaluable help of \cite{Benoist-Hulin}, the claim and thus the theorem.
\end{proof}

\section{Quasiregular maps}\label{sec-quasiregular}

In this section we recall some basic facts about quasiregular maps, about the relation between the conformal geometry of $\BS^n$ and the metric geometry of $\BH^{n+1}$, and prove a compactness result for quasiregular maps which might find use in other settings. The latter is the only new result in this section. 

\subsection{Basic}
We recall basic facts and definitions about quasiregular maps and refer to \cite{Rickman-book}, specially Chapter I therein, for a detailed discussion. See also V\"ais\"al\"a's beautiful ICM survey \cite{VaisalaICM} for a pleasant exposition of classical results on quasiregular maps.

Recall from the introduction that a continuous mapping $f\colon \BS^n \to \BS^n$ is \emph{quasiregular} if $f$ is in the Sobolev space $W^{1,n}(\BS^n, \BS^n)$ and there exists a constant $K\ge 1$ for which $f$ satisfies the quasiconformality condition
\begin{equation}
\label{eq:qr}
\norm{Df(x)}^n \le K \det Df(x)
\end{equation}
for almost every $x\in \BS^n$. In particular, quasiregular homeomorphisms are quasiconformal maps. 

\begin{bem}
A comment on terminology. The minimal constant $K$ in \eqref{eq:qr} is called the \emph{outer distortion of $f$} and denoted $K_O(f)$. The minimal constant $K\ge 1$ satisfying
\[
\det Df(x) \le K_I \min_{|v|=1} |Df(x) v|^n 
\]
for almost every $x\in \BS^n$ is called the \emph{inner distortion $K_I(f)$ of $f$}. The constants $K_O(f)$ and $K_I(f)$ are related by elementary estimates $K_O(f) \le K_I(f)^{n-1}$ and $K_I(f)\le K_O(f)^{n-1}$ and the constant
\[
K(f) = \max\{ K_O(f), K_I(f)\}
\]
is called the \emph{distortion of $f$}. Although sometimes in the literature quasiregular mappings satisfying \eqref{eq:qr} are called \emph{$K$-quasiregular mappings}, we reserve this term to a quasiregular mappings $f\colon \BS^n\to \BS^n$ satisfying $K(f)\le K$ as is done in \cite{Rickman-book}. 
\end{bem}

Quasiregular mappings need not be locally injective. We have, however, by theorems of Reshetnyak, that non-constant quasiregular mappings are (generalized) branched covers, in the sense that they are discrete and open. This means that pre-images of points are discrete sets, and that open sets are mapped to open sets.

\begin{fact}
Non-constant quasiregular maps are discrete and open.
\end{fact}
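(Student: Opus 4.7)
The plan is to run Reshetnyak's classical strategy, which fuses quasilinear elliptic PDE with topological degree theory. First I would note that the distortion inequality $\|Df\|^n \le K\det Df$ forces $\det Df\ge 0$ almost everywhere, and that for a non-constant $f$ the set $\{\det Df>0\}$ has positive measure; otherwise $Df$ would vanish a.e., and the ACL characterisation of $W^{1,n}$-maps together with continuity would make $f$ constant on the connected manifold $\BS^n$. Second, one needs Lusin's $(N)$ condition, i.e.\ that quasiregular maps send $n$-null sets to $n$-null sets, which is a standard consequence of ACL together with the distortion bound.

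For discreteness the key idea is that, read in local charts, a $K$-quasiregular map satisfies a quasilinear elliptic equation of $n$-Laplace type $\mathrm{div}\,\mathcal{A}(x,\nabla u)=0$ with structure constants depending only on $n$ and $K$. More refinedly, for any fixed $y_0\in\BS^n$ the function $u(x)=\log\frac{1}{|f(x)-y_0|}$, expressed in a chart avoiding $y_0$, is $\mathcal{A}$-superharmonic on the complement of $f^{-1}(y_0)$. Since $u\to+\infty$ on $f^{-1}(y_0)$, the minimum principle and the capacity estimates available for $\mathcal{A}$-superharmonic functions of exponent $n$ force $f^{-1}(y_0)$ to be a closed set of vanishing $n$-capacity. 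For a continuous $W^{1,n}$-map on an $n$-manifold this is equivalent to $f^{-1}(y_0)$ being discrete.

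For openness I would invoke topological degree. Fix $x_0\in\BS^n$; by discreteness there exists a small ball $U\ni x_0$ with $f^{-1}(f(x_0))\cap\overline{U}=\{x_0\}$, so that the local index $i(x_0,f)=\deg(f,U,f(x_0))$ is well defined. The change-of-variables formula
\[
\int_U \det Df\,\mathrm{d}x=\int_{\BS^n} N(y,f,U)\,\mathrm{d}y,
\]
together with $\det Df\ge 0$ and the fact that $\det Df>0$ on a set of positive measure inside every neighbourhood of $x_0$ (non-constancy combined with Lusin $(N)$ forbids $f$ from being constant on any neighbourhood, since the image would then have zero measure while $f$ is not locally constant), yields $i(x_0,f)\ge 1$. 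A strictly positive local index implies that $f(U)$ contains a full neighbourhood of $f(x_0)$, and letting $x_0$ vary gives openness.

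I expect the main obstacle to be the PDE step: establishing that $\log\frac{1}{|f-y_0|}$ is $\mathcal{A}$-superharmonic and developing the weak minimum principle plus $n$-capacity theory needed to conclude that superlevel sets $\{u=+\infty\}$ are discrete. This is the technical heart of Reshetnyak's theorem and is nontrivial to set up from scratch, but it is classical and fully worked out in \cite{Rickman-book}, from which I would import it rather than reprove it.
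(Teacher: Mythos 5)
The paper does not prove this statement: it is recorded as a \emph{Fact}, attributed to Reshetnyak, with \cite{Rickman-book} given as the reference---which is also the position you end up taking (``import it rather than reprove it''), so in spirit your proposal matches the paper, and your outline of Reshetnyak's proof is the standard one.

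There is, however, a concrete gap in the discreteness step of your sketch. From the $\mathcal A$-superharmonicity of $u=\log\frac1{|f-y_0|}$ and the attendant capacity estimates you correctly deduce that $f^{-1}(y_0)$ has vanishing $n$-capacity; but you then assert that for a continuous $W^{1,n}$-map ``this is equivalent to $f^{-1}(y_0)$ being discrete,'' and that is false. Vanishing $n$-capacity implies that $f^{-1}(y_0)$ is totally disconnected---i.e.\ that $f$ is \emph{light}---but not that it is discrete: the compact countable set $\{0\}\cup\{1/k : k\ge 1\}$, viewed in $\BR^n$ with $n\ge 2$, has zero $n$-capacity while having an accumulation point. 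Promoting lightness to discreteness is a genuinely separate topological ingredient. The actual route is: (i) the capacity estimate gives lightness; (ii) an approximation/degree argument shows $f$ is sense-preserving; (iii) a purely topological theorem (Titus--Young for $n=2$ and its higher-dimensional generalization) then yields that a light, sense-preserving map between oriented $n$-manifolds is simultaneously open \emph{and} discrete. In particular openness and discreteness are obtained together at the end, whereas your sketch first claims discreteness and then uses it to produce the normal neighbourhood $U$ in the openness argument, so the gap propagates. None of this is fatal for the paper's purposes---you and the authors both ultimately cite \cite{Rickman-book}---but the single step from ``zero capacity'' to ``discrete'' should be replaced by the two-step path through lightness and sense-preservation.
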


Moreover, if $f\colon \BS^n \to \BS^n$ is a non-constant quasiregular map, then by the Chernavskii--V\"ais\"al\"a theorem, its {\em branch set $B_f$}, that is the set of points where $f$ fails to be a local homeomorphism, has topological dimension at most $n-2$. In fact, the sets $f(B_f)$ and $f^{-1}(f(B_f))$ also have topological dimension at most $n-2$. It follows that none of these closed sets separate, and in fact the restriction 
$$f|_{\BS^n\setminus f^{-1}(f(B_f))} \colon \BS^n \setminus f^{-1}(f(B_f)) \to \BS^n \setminus f(B_f)$$ 
is a covering map. More importantly for us, the branch set $B_f$ has vanishing Lebesgue measure.

\begin{fact}
The branch set $B_f$ of a non-constant quasiregular map $f:\BS^n\to\BS^n$ is a closed set of topological dimension at most $n-2$ satisfying $\vol(B_f)=0$.
\end{fact}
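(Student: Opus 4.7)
The three assertions are proved separately. Closedness of $B_f$ is immediate from the definition: if $f$ is a local homeomorphism at $x$, then $f$ restricts to a homeomorphism on some open neighborhood $U$ of $x$, so every point of $U$ lies outside $B_f$; hence $\BS^n\setminus B_f$ is open. The topological dimension bound $\dim B_f\le n-2$ is the Chernavskii--V\"ais\"al\"a theorem, which is recalled in the paragraph immediately preceding the Fact, and I would simply quote it from \cite{Rickman-book}.

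The remaining and main point is $\vol(B_f)=0$. The strategy is to exhibit a full-measure subset of $\BS^n$ that is disjoint from $B_f$. Three ingredients go into this. First, since $f\in W^{1,n}_{\mathrm{loc}}$ is quasiregular, $f$ is classically differentiable at almost every point, and on this full-measure set the distortion inequality \eqref{eq:qr} implies that $\det Df(x)=0$ if and only if $Df(x)=0$. Second, for a non-constant quasiregular map the set $\{\det Df=0\}$ has measure zero; this is a classical fact obtained by combining the Lusin $(N)$-property of quasiregular mappings, the area formula, and the openness and discreteness given by Reshetnyak's theorem, to rule out that $f$ could collapse a set of positive measure without contradicting non-constancy. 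Third, at any point $x$ of classical differentiability with $\det Df(x)>0$, the invertibility of the linear part $Df(x)$ combined with the discrete-open property of $f$ forces $f$ to be a local homeomorphism in a neighborhood of $x$, so that $x\notin B_f$. Putting these three ingredients together yields $\vol(B_f)=0$.

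I would regard the second ingredient---vanishing of $\{\det Df=0\}$ for non-constant quasiregular maps---as the main obstacle; although it is standard, its proof is not completely elementary, so I would simply cite \cite[Chapter~I]{Rickman-book} for it. The first and third ingredients are routine consequences of Sobolev regularity and Reshetnyak's theorem, and the remainder of the argument is bookkeeping.
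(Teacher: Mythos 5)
The paper itself does not prove this Fact: it is one of several background results on quasiregular maps recalled from \cite{Rickman-book}, with the Chernavskii--V\"ais\"al\"a theorem cited for the dimension bound and the measure-zero statement given without argument, so there is no proof in the paper to compare against. Your sketch of the measure-zero part is the standard one and is correct. Two small imprecisions are worth flagging since the argument leans on them. First, a.e.\ differentiability is not a ``routine consequence of Sobolev regularity'' at the critical exponent $p=n$; it uses the openness/monotonicity of $f$ coming from Reshetnyak's theorem, so it sits on the same footing as your other two ingredients rather than being purely a Sobolev statement. Second, the step from ``$Df(x)$ invertible and $f$ discrete and open'' to ``$x\notin B_f$'' is not the inverse function theorem (which is unavailable for a merely a.e.\ differentiable map); it is a degree argument: one shows that the local index $i(x,f)$ equals $1$ by comparing $f$ with its differential on small spheres centred at $x$, and for discrete, open, sense-preserving maps $i(x,f)=1$ is equivalent to $x\notin B_f$. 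Both points are indeed in \cite{Rickman-book}, as you say, but they deserve to be named explicitly because the naive inverse-function-theorem intuition fails here.
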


The sets $f(B_f)$ and $f^{-1}(f(B_f))$ also have vanishing measure. This follows from the fact we just stated together with the facts that the push-forward of Lebesgue measure under a non-constant quasiregular map is absolutely continuous with respect to Lebesgue measure.

\begin{fact}
The push-forward measure $f_*\vol$ of the Lebesgue $n$-measure $\vol$ on $\BS^n$ under a non-constant quasiregular map is absolutely continuous with respect to $\vol$, meaning that $\vol(f(E))=0$ if and only if $\vol(E)=0$ for any measurable subset $E\subset\BS^n$.
\end{fact}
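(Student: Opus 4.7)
The assertion actually bundles two statements together: (a) Lusin's condition $(N)$, i.e.\ $\vol(E)=0 \Rightarrow \vol(f(E))=0$, and (b) the reverse condition $(N^{-1})$, i.e.\ $\vol(f(E))=0 \Rightarrow \vol(E)=0$. Both are classical for non-constant quasiregular maps, and I would approach them in parallel through the area formula.

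The plan is to first recall that Reshetnyak's theorem guarantees that a non-constant $f\in W^{1,n}_{\mathrm{loc}}(\BS^n,\BS^n)$ satisfying $\norm{Df}^n\le K\det Df$ almost everywhere is continuous, discrete and open, is differentiable almost everywhere, and satisfies the area formula
$$\int_E \det Df\, d\vol \;=\; \int_{\BS^n} N(y,f,E)\, d\vol(y),$$
for every measurable $E\subset\BS^n$, where $N(y,f,E)=\#(f^{-1}(y)\cap E)$. With this tool in hand, direction (a) is immediate: if $\vol(E)=0$ then $\det Df$ is integrable on $\BS^n$, the left hand side vanishes, so $N(y,f,E)=0$ for almost every $y$; since $N(y,f,E)\ge 1$ for every $y\in f(E)$, this forces $\vol(f(E))=0$.

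For direction (b) I would invoke the further consequence of Reshetnyak's work that, for a non-constant quasiregular map, the Jacobian $\det Df$ is strictly positive almost everywhere; this uses both the distortion inequality $\norm{Df}^n\le K\det Df$ (which forces $\det Df\ge 0$ a.e.) and the openness and discreteness of $f$ (which rule out $\det Df\equiv 0$ on any set of positive measure by the Stepanov differentiability argument combined with the fact that $f$ cannot be locally constant). Once $\det Df>0$ almost everywhere is granted, the area formula gives $0=\vol(f(E))\ge \int_E \det Df\, d\vol$, and strict positivity of the integrand forces $\vol(E)=0$.

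The main obstacle is the area formula itself: for a generic $W^{1,n}$ map the area formula can fail precisely because Lusin's condition $(N)$ can fail, so one must be careful not to argue in circles. The quasiregularity hypothesis is exactly what allows one to break the circle, either via Reshetnyak's original mollification and capacity estimates (which establish $(N)$ for quasiregular maps directly from the distortion inequality before appealing to the area formula) or via the modern approach through the $n$-modulus inequality. I would simply cite \cite{Rickman-book} (or the survey \cite{VaisalaICM}) for these foundational results and use them as a black box, since the delicate point lies entirely in the classical theory of quasiregular maps and not in any new ingredient needed for this paper.
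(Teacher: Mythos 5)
The paper does not prove this \emph{Fact}; it is recalled from the classical theory (Rickman, \cite{Rickman-book}), and your write-up is essentially the standard textbook argument one would find there. Your overall strategy --- establish condition $(N)$ first (via Reshetnyak's approach), invoke the area/change-of-variables formula, and use $\det Df>0$ a.e.\ for non-constant quasiregular maps to get $(N^{-1})$ --- is correct, and your caution about the potential circularity with $(N)$ and the area formula is well placed.

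One small but genuine slip in direction (b): you write that the area formula ``gives $0=\vol(f(E))\ge \int_E \det Df\, d\vol$''. This inequality is backwards. The area formula yields
\[
\int_E \det Df\, d\vol \;=\; \int_{\BS^n} N(y,f,E)\, d\vol(y) \;\ge\; \vol(f(E)),
\]
since $N(y,f,E)\ge 1$ for $y\in f(E)$, so the chain as written does not close. The repair is immediate and you should state it as such: if $\vol(f(E))=0$, then $N(y,f,E)=0$ for a.e.\ $y$ (because $N(y,f,E)>0$ forces $y\in f(E)$), hence $\int_E \det Df\, d\vol = 0$, and then $\det Df>0$ a.e.\ gives $\vol(E)=0$. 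With that correction, your argument matches the classical proof that the paper is implicitly citing.
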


Returning to the topological properties of non-constant quasiregular maps, we record that, whenever $f:\BS^n\to\BS^n$ is such a map  then each point $x\in\BS^n$ has a {\em normal neighborhood} $U \subset \BS^n$ with respect to $f$, by which we mean that $f(\partial U)=\partial(f(U))$ and $f^{-1}(f(x)) \cap \overline{U} = \{x\}$, where $\overline U$ is the closure of $U$.

\begin{fact}
If $f\colon \BS^n \to \BS^n$ is a non-constant quasiregular map then for every $y\in \BS^n$ there is $r_f(y)>0$ with the property that, for $r\in (0,r_f(y)]$, the components $U_1,\ldots, U_k$ of $f^{-1}(B(y,r))$ are normal neighborhoods. Moreover, for each domain $U_i$ the local degree function $B(y,r)\to \BZ$, $y'\mapsto \deg(y',f,U_i)$, is constant.
\end{fact}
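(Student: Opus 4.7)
The plan is to combine discreteness of $f^{-1}(y)$ (guaranteed by Reshetnyak's theorem cited just before the statement) with the openness of $f$ and elementary Brouwer degree theory. Throughout I will write $f^{-1}(y)=\{x_1,\ldots,x_k\}$, a set that is finite because it is discrete and $\BS^n$ is compact.

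First I would construct the neighborhoods. Choose pairwise disjoint open neighborhoods $W_1,\ldots,W_k$ of $x_1,\ldots,x_k$ with pairwise disjoint closures. The set $K=\BS^n\setminus\bigcup_i W_i$ is compact and disjoint from $f^{-1}(y)$, so $f(K)$ is compact and does not contain $y$. Setting
\[
r_f(y)=\frac12\,\mathrm{dist}\bigl(y,f(K)\bigr)>0
\]
ensures that for every $r\in(0,r_f(y)]$ one has $B(y,r)\cap f(K)=\emptyset$, hence $f^{-1}(B(y,r))\subset\bigcup_i W_i$. Since the $W_i$ have pairwise disjoint closures and each is open, any connected component of $f^{-1}(B(y,r))$ lies entirely in one $W_i$; as every $x_i$ must lie in some component, labelling gives components $U_1,\ldots,U_k$ with $x_i\in U_i\subset W_i$ and $\overline{U_i}\subset\overline{W_i}$.

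Next I would verify the normal neighborhood property $f(\partial U_i)=\partial f(U_i)$. By openness of $f$ the image $f(U_i)$ is open; since $U_i\subset f^{-1}(B(y,r))$ one has $f(U_i)\subset B(y,r)$. Suppose $z\in\partial U_i$. Continuity gives $f(z)\in\overline{B(y,r)}$. If we had $f(z)\in B(y,r)$, then $z\in f^{-1}(B(y,r))$, an open set, so some open neighborhood of $z$ would lie in $f^{-1}(B(y,r))$; this neighborhood intersects $U_i$ and is connected, so it lies in the component $U_i$, contradicting $z\notin U_i$. Hence $f(z)\in\partial B(y,r)$, so $f(\partial U_i)\subset\partial B(y,r)$. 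The reverse containment follows because $f(\overline{U_i})$ is compact and contains $f(U_i)$, whose closure meets $\partial B(y,r)$ along $f(\partial U_i)$. One readily checks $f(U_i)=B(y,r)$, using that $f|_{\overline{U_i}}:\overline{U_i}\to\overline{B(y,r)}$ is a continuous map of compacta sending boundary to boundary and whose image is both open (by openness of $f$) and closed in $B(y,r)$ (standard boundary-preserving argument), hence all of $B(y,r)$ by connectedness. Finally, $f^{-1}(f(x_i))\cap\overline{U_i}=f^{-1}(y)\cap\overline{U_i}=\{x_i\}$ because the other preimages lie in the $\overline{W_j}$'s, which are disjoint from $\overline{W_i}\supset\overline{U_i}$.

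It remains to see that $y'\mapsto\deg(y',f,U_i)$ is constant on $B(y,r)$. This is where the normal neighborhood property does the work: the local (Brouwer) degree $\deg(\cdot,f,U_i)$ is defined and locally constant on $B(y,r)\setminus f(\partial U_i)$, since for each $y'$ in this set one can choose a small ball around $y'$ disjoint from $f(\partial U_i)$ on which the degree is invariant. But we have just shown $f(\partial U_i)=\partial B(y,r)$, so
\[
B(y,r)\setminus f(\partial U_i)=B(y,r)\setminus\partial B(y,r)=B(y,r),
\]
which is connected; thus the locally constant integer-valued function $\deg(\cdot,f,U_i)$ is in fact constant on all of $B(y,r)$. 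The main subtlety, and the only place I expect to pause, is the verification that $f(U_i)$ fills all of $B(y,r)$ rather than a proper open subset; it is handled by the standard clopen argument above once one knows $\overline{f(U_i)}\cap B(y,r)$ is relatively closed in $B(y,r)$, which follows from $f(\partial U_i)\subset\partial B(y,r)$ together with compactness of $\overline{U_i}$.
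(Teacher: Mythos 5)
The paper offers no proof of this Fact; it is recorded as a standard result on quasiregular maps (cf.\ Rickman's monograph \cite{Rickman-book}, Chapter~I), so there is no argument of the paper's to compare against. Your proof is correct and is essentially the classical topological argument based on discreteness, openness, and local Brouwer degree. The construction of $r_f(y)$ via disjoint neighborhoods $W_i$ of the finitely many preimages and compactness of $f(K)$ is the right device, the clopen argument for $f(U_i)=B(y,r)$ is standard and correctly identified as the main point, and the passage to constancy of the degree on the connected set $B(y,r)\setminus f(\partial U_i)=B(y,r)$ is exactly the standard invocation of homotopy invariance.

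One small gap you should close: you label components $U_1,\ldots,U_k$ with $x_i\in U_i$, but as stated you have not ruled out the existence of a component $V$ of $f^{-1}(B(y,r))$ containing no preimage of $y$. Your own clopen argument disposes of this immediately, since it applies to \emph{any} component $V\subset W_i$: one obtains $f(\partial V)\subset\partial B(y,r)$ and then $f(V)=B(y,r)$, so $y\in f(V)$ and $V$ meets $f^{-1}(y)\cap W_i=\{x_i\}$, forcing $V=U_i$. It is also worth making explicit that the $W_i$ should be chosen to be proper open sets (e.g.\ small balls) so that $K\ne\emptyset$ and $\mathrm{dist}(y,f(K))$ is finite; this is trivial but needed for $r_f(y)$ to make sense as written.
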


Since, by another theorem of Reshetnyak (see \cite[Theorem I.4.5]{Rickman-book}), a non-constant quasiregular mapping is sense-preserving, we also have that if $\deg(y, f, U_i) =1$ then $f|_{U_i}$ is a local homeomorphism. Recall that a quasiregular homeomorphism $f:U\to V$ between open sets of the sphere is also known as a {\em quasiconformal} map.

\subsection{Compactness}

Besides the generalities we just mentioned, we use in the sequel a compactness result for quasiregular maps of fixed degree mentioned in the introduction. This is our next goal.

\begin{named}{Proposition \ref{prop-compactness}}
Let $n\ge 2$, $K\ge 1$, and $d\in \BN$. Then, given a sequence $(f_m)$ of $K$-quasiregular mappings $\BS^n \to \BS^n$ of degree at most $d$, there exists a set $P\subset \BS^n$ of cardinality at most $d$ and a subsequence $(f_{m_k})$ converging locally uniformly on $\BS^n\setminus P$ to a $K$-quasiregular mapping $f\colon \BS^n \to \BS^n$ of degree at most $d$. 
\end{named}

Although we do not need this terminology in what follows, this compactness property states that this family of maps is in fact \emph{quasinormal}; see e.g.\;Pang, Nevo, and Zalcman \cite{Pang-Nevo-Zalcman} for terminology.

\begin{proof}[Proof of Proposition \ref{prop-compactness}]
We show first that, given such a sequence $(f_m)$, there exists a finite set $P$ of uniformly bounded cardinality satisfying the wanted property. After this we show that the cardinality of $P$ is actually bounded by $d$. Note that we may assume that $d\ge 1$, since quasiregular maps of degree zero are constant maps.

For each $\ell\in \BN$ let $\CB_\ell$ be a finite covering of $\BS^n$ by round balls of radius $2^{-\ell}$ and satisfying that $\frac 12B\cap\frac 12B'=\emptyset$ for all distinct $B,B'\in\CB_\ell$. Note that since the balls $\frac 12B$ with $B\in \CB_{\ell}$ are pair-wise disjoint, a point in $\BS^n$ belongs to at most $4^n$ balls in $\CB_{\ell}$.

Let $\delta>0$ be a parameter which we will let to tend to zero at the end of the proof. Now, for $m\in \BN$, let $\CB_{\ell,m}(\delta)\subset \CB_\ell$ be the subcollection consisting of balls $B\in \CB_\ell$ whose image $f_m(B)$ contains the complement of a ball of radius $\delta$ in $\BS^n$. We are going to bound the cardinality $|\CB_{\ell,m}(\delta)|$ of this collection of balls 
independently of $\ell$ and $m$. To do so note that 
\begin{equation}\label{borel-cantelli}
\sum_{B\in \CB_{\ell,m}(\delta)} \vol(f_m(B)) \ge (1-c\cdot\delta^n)\cdot \vert\CB_{\ell,m}(\delta)\vert
\end{equation}
for some $c$ depending only on the dimension. Recall that $\vol$ is the spherical volume of $\BS^n$, normalized in such a way that $\vol(\BS^n)=1$.

It follows from \eqref{borel-cantelli} that there is a point $p\in\BS^n$ which belongs to at least $(1-c\cdot \delta^n)\cdot\vert\CB_{\ell,m}(\delta)\vert$ images of balls $B\in\CB_{\ell,m}(\delta)$. Taking into account that each point belongs to at most $4^n$ balls in $\CB_{\ell,m}(\delta)\subset\CB_{\ell}$, we obtain that $p$ has at least $4^{-n}\cdot(1-c\cdot \delta^n)\vert\CB_{\ell,m}(\delta)\vert$ distinct preimages. Since $f_m$ is quasiregular, we conclude that $\vert f_m^{-1}(p)\vert \le \deg f_m \le d$,
which means that $\frac {1-c\cdot\delta^n}{4^n}\vert\CB_{\ell,m}(\delta)\vert\le d$ and thus that
\begin{equation}\label{eq-cardinality bound}
\vert\CB_{\ell,m}(\delta)\vert\le \frac{4^n\cdot d}{1-c\cdot\delta^n}.
\end{equation}
We have established the desired bound.

At this point we might assume that, after passing to a subsequence, the sets $\CB_{\ell,m}(\delta)$ are independent of $\delta$, meaning that there exists $\CB_\ell(\delta)\subset\CB_\ell$ for which
$$\CB_{\ell,m}(\delta)=\CB_\ell(\delta)$$
for all $m$. The uniform cardinality bound \eqref{eq-cardinality bound} implies also that, up to passing to a further subsequence, we can assume that, when $\ell\to\infty$, the sets $\bigcup \CB_{\ell}(\delta)$ converge in the Hausdorff topology to a finite set $P \subset \BS^n$. 

Let now $U\subset \BS^n$ be an open set whose closure $\overline{U}$ does not meet the set $P$. For each $\ell\in \BN$, let $\CB_\ell(U)$ be the collection of balls in $\CB_\ell$ which meet $U$. Since $P$ and $U$ have positive distance we get that there exists an index $\ell_0\in \BN$ with the property that the union of the balls in $\CB_\ell(U)$ is disjoint of the balls in $\CB_\ell(\delta)$ for all $\ell\ge\ell_0$. Now, for each $B\in \CB_{\ell_0}(U)$ we have that, with only finitely many $m$'s as exceptions, the set $f_m(B)$ is contained in the complement of a ball of radius $\delta$ in $\BS^n$. Thus the sequence $f_m|_B$ is equicontinuous (\cite[Corollary III.2.7]{Rickman-book}) and has a subsequence tending uniformly to a $K$-quasiregular map $f_B \colon B \to \BS^n$ (\cite[Theorem VI.8.6]{Rickman-book}). A standard diagonal argument now yields a subsequence $(f_{m_j})$ tending locally uniformly on $\BS^n\setminus P$ to a $K$-quasiregular map $f \colon \BS^n\setminus P \to \BS^n$. 

Before bounding the cardinality of $P$ by $d$, we show that $f$ is at most $d$-to-1 and thus extends to a $K$-quasiregular map $\BS^n\to \BS^n$ \cite[Corollary III.2.11]{Rickman-book}. To see that this is the case, let $y\in \BS^n$ and let $x_1,\ldots, x_k$ be distinct points in $f^{-1}(y)$. Since $f$ is discrete and open, there exists (pair-wise disjoint) normal neighborhoods $U_1,\ldots, U_k$ of points $x_1,\ldots, x_k$, that is, domains compactly contained in $\BS^n\setminus P$ for which $f(\partial U_i)= \partial(f(U_i))$ and $f^{-1}(y) \cap \overline{U_i} = \{x_i\}$ for each $i=1,\ldots, k$. Since the maps $(f_{m_j})$ converge locally uniformly to $f$, we have that $f_{m_j}^{-1}(y) \cap U_i \ne \emptyset$ for each $i=1,\ldots, k$ by the upper semi-continuity of the local index; see e.g.\;\cite[Lemma VI.8.13]{Rickman-book}. This means that $f_{m_j}^{-1}(y)$ has at least $k$ distinct preimages. This implies that $k\le d$ because $f_m$ is a quasiregular map of degree $d$. We have proved that $f$ is at most $d$-to-one.
 
It remains to show that $\# P \le d$. Given $\ell_1\ge 0$ and $m_1\ge 0$, there exists $\ell\ge \ell_1$, $m\ge m_1$, and a collection $\{B_x\}_{x\in P} \subset \CB_{\ell,m}(\delta)$ a collection of pair-wise disjoint balls satisfying $x\in B_x$ for each $x\in P$. Now, the same argument as above shows that there exists a point $y\in \BS^n$ which meets at least $(1-c_n\delta^n)|\CB_{\ell,m}(\delta)|$ of the images $f_m(B_x)$. Since this time the balls $B_x$ are pair-wise disjoint, we have that 
\[
|P| = |\CB_{\ell,m}(\delta)| \le \frac{1}{1-c_n \delta^n} |f_m^{-1}(y)| \le \frac{d}{1-c_n \delta^n} \to d
\]
as $\delta \to 0$. The proof is complete.
\end{proof} 

In this note we will use Proposition \ref{prop-compactness} over and over again. Something else that we will use also all the time is the relation between conformal geometry on $\BS^n$ and the metric geometry on $\BH^{n+1}$, or more precisely between moduli of annuli in $\BS^n$ and distances of points in $\BH^{n+1}$. We recall this relation next, proving only facts which are well-known but (perhaps) hard to find explicitly in the literature.

\subsection{Modulus and hyperbolic distances}
Recall that the \emph{conformal modulus of a path family} of a family of paths $\Gamma$ in a $n$-dimensional Riemannian manifold $M$ is
\[
\Mod(\Gamma) = \inf_{\rho} \int_M \rho^n \vol_M,
\]
where the infimum is taken over all non-negative Borel functions $\rho$ on $M$ satisfying
\[
\int_\gamma \rho\ \text{ds} \ge 1
\]
for all locally rectifiable paths $\gamma\in \Gamma$; see \cite[Chapter 1]{Vaisala-book} or \cite{Heinonen-book}. We are only interested in the classical cases that $M$ is the sphere $\BS^n$, euclidean space $\BR^n$, or open sets thereof. 

The transformation formula implies that a $K$-quasiconformal homeomorphism $f\colon M \to N$ satisfies for every path family $\Gamma$ in $M$ the inequality
\begin{equation}
\label{eq:QC_Gamma}
\frac{1}{K} \Mod(\Gamma) \le \Mod(f(\Gamma)) \le K \Mod(\Gamma),
\end{equation}
where $f(\Gamma) = \{ f\circ \gamma \colon \gamma \in \Gamma\}$. In fact, a homeomorphism satisfying \eqref{eq:QC_Gamma} is actually quasiconformal, see \cite[Chapter 2]{Vaisala-book}.

We are mostly interested in the modulus of certain curve families associated to (topological) annuli $A\subset\BR^n$. If we denote by $\D_1A$ and $\D_2A$ the two boundary components of $A$, let $\Gamma_A$ be the set of all paths contained in $A$ and connecting $\D_1A$ and $\D_2A$. We then set
$$\Mod(A)=\Mod(\Gamma_A)$$
and refer to this quantity as the {\em modulus of $A$}. For reasons that will be clear shortly, round annuli 
$$A_{r,R}=\{x\in\BR^n\vert r\le\vert x\vert\le R\}$$
will be specially relevant. In this case the modulus is given by 
\begin{equation}\label{eq-modulus-round}
\Mod(A_{r,R})= \omega_{n-1}\left( \log\frac{R}{r}\right)^{1-n},
\end{equation}
for some $\omega_{n-1}$ depending only on the dimension. Note in particular that the modulus is small if the ratio $R/r$ is large, and large if the said ratio is close to $1$.

\begin{bem}
In fact, as the reader probably suspected, $\omega_{n-1}$ is the volume of the unit sphere of dimension $n-1$ with respect to the measure induced form the Riemannian metric. This measure is a multiple of $\vol(\cdot)$, namely $\omega_{n-1}\vol(\cdot)$.
\end{bem}

As we mentioned earlier, the modulus of round annuli in $\BS^n$ is closely related to distances in $\BH^{n+1}$:

\begin{lem}\label{lem:distance-modulus}
Given a round open annulus $A\subset\BS^n=\D\BH^{n+1}$ let $U$ and $V$ be the two connected components of $\BH^{n+1}\setminus\Dome(A)$. Then we have
$$\Mod(A)\ge \omega_{n-1}\cdot d_{\BH^{n+1}}(x,y)^{1-n}$$
for all $x\in U$ and $y\in V$. Moreover, equality holds for if and only if $x$, $y$ realise the distance between $\D U $ and $\D V$.
\end{lem}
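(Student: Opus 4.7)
The plan is to exploit the invariance of both sides of the claimed inequality under isometries of $\BH^{n+1}$ to put the annulus in a standard position, and then carry out an explicit computation in the upper half-space model. Since the modulus of a path family on $\BS^n$ is a conformal invariant, and since every isometry of $\BH^{n+1}$ extends to a conformal automorphism of $\BS^n$, applying such an isometry preserves $\Mod(A)$, $\Dome(A)$, the components $U$ and $V$, and all hyperbolic distances. Working in the upper half-space model and identifying $\BR^n$ with $\partial_\infty \BH^{n+1}\setminus\{\infty\}$, I may therefore assume that $A = \{x \in \BR^n : r' < |x| < R'\}$ for some $0 < r' < R'$.

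In this normalization the two hyperplanes bounding $\Dome(A)$ are the Euclidean hemispheres of radii $r'$ and $R'$ over the origin, both of which are totally geodesic in $\BH^{n+1}$. The explicit formula \eqref{eq-modulus-round} then gives $\Mod(A) = \omega_{n-1}(\log(R'/r'))^{1-n}$. By rotational symmetry about the vertical axis through $0 \in \BR^n$, that axis is the common perpendicular to the two hyperplanes, meeting them at the points $(0,r')$ and $(0,R')$; the hyperbolic distance between these two points in the half-space metric $|dx|^2/x_{n+1}^2$ equals $\int_{r'}^{R'} dt/t = \log(R'/r')$. A standard convexity argument---the signed distance function to a totally geodesic hyperplane of $\BH^{n+1}$ is convex along geodesics---shows that this common perpendicular in fact realises the minimum distance between the two hyperplanes, so that $\mathrm{dist}(\partial U, \partial V) = \log(R'/r')$.

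To conclude, for arbitrary $x \in U$ and $y \in V$ the geodesic segment from $x$ to $y$ must enter $\Dome(A)$ across one of the bounding hyperplanes and exit across the other, and hence $d_{\BH^{n+1}}(x,y) \ge \mathrm{dist}(\partial U, \partial V) = \log(R'/r')$. Raising to the negative power $1-n$ reverses the inequality and yields
\[
\omega_{n-1}\cdot d_{\BH^{n+1}}(x,y)^{1-n} \le \omega_{n-1}\bigl(\log(R'/r')\bigr)^{1-n} = \Mod(A),
\]
with equality precisely when $x$ and $y$ realise the minimum distance between the two bounding hyperplanes. The only mildly non-routine point is the reduction to standard position, which rests on the classical fact that every round annulus in $\BS^n$ is the image of a concentric Euclidean annulus in $\BR^n$ under a M\"obius transformation---realised as the action on $\partial_\infty \BH^{n+1}$ of a suitable hyperbolic isometry sending the centre $\theta$ of $A$ to the origin of $\BR^n$.
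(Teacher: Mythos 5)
Your proof is correct and follows essentially the same route as the paper's: normalize by an isometry of $\BH^{n+1}$ so that $A$ becomes a concentric Euclidean annulus in the upper half-space model, observe that the boundary components of $\Dome(A)$ are concentric hemispheres at hyperbolic distance $\log(R'/r')$, and compare with the formula \eqref{eq-modulus-round}. You spell out two steps the paper leaves implicit---that the common perpendicular realizes the minimal distance between the two totally geodesic hyperplanes (via convexity of the distance function), and that a geodesic from $x\in U$ to $y\in V$ must cross both bounding hyperplanes---but these are exactly the routine verifications the paper compresses into ``the claim follows now from the formula for the modulus.''
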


This lemma is well-known and we just prove it here for convenience of the reader.

\begin{proof}
We work in the upper half space model $\BH^{n+1}\subset\BR^{n+1}$. We can thus move our round annulus around using M\"obius 
transformations so that $A=A_{r,1}$ for a suitable choice of $r<1$. The boundaries of the dome $\Dome(A)$ are then the euclidean hemispheres $\BS^n\cap\BH^{n+1}$ and $r\BS^n\cap\BH^{n+1}$. This two hemispheres are at distance $-\log(r)$ with respect to the hyperbolic distance. The claim follows now from the formula for the modulus given above.
\end{proof}

Lemma \ref{lem:distance-modulus} asserts that there is a quantitative relation between the conformal geometry of $\BS^n$ and metric geometry of $\BH^{n+1}$. However, for this lemma to be really useful we need to work with round annuli instead of general topological annuli. The following lemma asserts that, as long as we are sufficiently far away from the branch set, the image of a round annulus of small modulus under a quasiregular map contains a round annulus of comparable modulus.

\begin{lem}\label{modulus-nicerlemma}
Let $f\colon \BS^n \to \BS^n$ be a non-constant $K$-quasiregular map and $B=B(x,R)\subset \BS^n$ a ball of radius $R<1/2$ for which the restriction $f|_B \colon B \to f(B)$ is quasiconformal. Then there exists $\lambda_0=\lambda_0(n,K)$ and $C_1=C_1(n,K)\ge 1$ with the property that if $A\subset \frac 14 B$ is a round annulus with $\Mod(A)\le\lambda_0$ then $f(A)$ contains a round annulus $A'$ satisfying
$$\frac{1}{C_1} \Mod(A)\le \Mod(A') \le C_1 \Mod(A).$$
\end{lem}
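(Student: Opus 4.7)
The approach rests on a classical fact: a $K$-quasiconformal map on a domain restricts to an $\eta$-quasisymmetric map on any compact subset sitting well inside the domain, with $\eta=\eta_{K,n}$ depending only on $K$ and $n$ (see, e.g., \cite[Chapter 3]{Vaisala-book}). Since the closure of $A$ lies in $\frac{1}{4}B$, the center $p$ and concentric closed ball $\bar B(p,r_2)$ all sit in $\frac{1}{4}B\subset B$ with relative distance to $\partial B$ bounded below by a universal constant, so $f|_{\bar B(p,r_2)}$ is $\eta$-quasisymmetric with constants depending only on $K$ and $n$. Here $p$ is the center of the round annulus and $0<r_1<r_2$ are its radii.

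The plan is to find a round annulus $A'\subset f(A)$ centered at $f(p)$ that separates the two boundary components of $f(A)$. For $i=1,2$, set
\[
m_i = \inf_{d(y,p)=r_i} d(f(y),f(p)) \quad\text{and}\quad M_i = \sup_{d(y,p)=r_i} d(f(y),f(p)),
\]
so that $f(\partial B(p,r_i))$ lies in the round shell $\{m_i\le d(\cdot,f(p))\le M_i\}$. Applying the quasisymmetry condition to appropriate triples centered at $p$ yields the roundness bound $M_i\le \eta(1)\,m_i$, the enclosure $f(\bar B(p,r_1))\subset\bar B(f(p),\eta(1)M_1)$, and the separation bound $M_1/m_2\le \eta(r_1/r_2)$. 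Since the formula \eqref{eq-modulus-round} forces $r_2/r_1\to\infty$ as $\Mod(A)\to 0$, we can choose $\lambda_0=\lambda_0(K,n)>0$ small enough that $\Mod(A)\le\lambda_0$ guarantees $\eta(1)M_1<m_2$. Then
\[
A' \;=\; \{y\in\BS^n : \eta(1)M_1 < d(y,f(p)) < m_2\}
\]
is a nonempty round annulus, is contained in $f(A)$ (because its inner boundary excludes $f(\bar B(p,r_1))$ and its outer boundary lies inside $f(B(p,r_2))$), and separates the two boundary components of $f(A)$.

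The lower bound $\Mod(A')\ge \frac{1}{K}\Mod(A)$ is immediate from the quasiconformal distortion estimate \eqref{eq:QC_Gamma} together with the monotonicity $\Mod(A')\ge \Mod(f(A))$ valid for any separating subannulus. For the upper bound I would invoke the classical H\"older-type control $\eta(t)\le C\cdot t^{1/\alpha}$ for small $t$, with $\alpha=\alpha(K,n)\ge 1$ (the higher-dimensional analogue of Mori's theorem for quasiconformal mappings). Combined with the separation bound this gives
\[
\log\frac{m_2}{\eta(1)M_1} \;\ge\; \frac{1}{\alpha}\log\frac{r_2}{r_1}-O(1),
\]
and feeding this into \eqref{eq-modulus-round} yields $\Mod(A')\le C_1\Mod(A)$ for some $C_1=C_1(K,n)$ after possibly shrinking $\lambda_0$ once more. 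The main technical subtlety is ensuring that the quasisymmetry function $\eta$ and its H\"older exponent $\alpha$ depend only on $K$ and $n$, with no dependence on the position or size of $A$ inside $B$; this is exactly what the uniform buffer provided by $\bar A\subset\frac{1}{4}B$ guarantees.
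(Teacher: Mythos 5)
Your proof is correct and follows essentially the same route as the paper's: both normalize so that the quasisymmetry (respectively power-quasisymmetry) of $f$ on a concentric ball well inside $B$ has constants depending only on $K$ and $n$, then extract a concentric round annulus $A'$ inside $f(A)$ and compare $\log(R/r)$ to $\log(r_2/r_1)$ via the H\"older exponent. The one genuine variation is the lower bound on $\Mod(A')$: the paper derives it from the upper half of the power quasisymmetry inequality $R/r \le C(t/s)^{\alpha}$, whereas you obtain it from the overflow principle ($A'$ separates the boundary components of $f(A)$, so $\Mod(A') \ge \Mod(f(A))$) combined with the quasiconformal modulus quasi-invariance $\Mod(f(A)) \ge K^{-1}\Mod(A)$. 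Your route for the lower bound is slightly cleaner and gives the sharper constant $K$ instead of a power of the H\"older exponent, though this makes no difference for the application.
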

\begin{proof}
Note that up to post- and pre-composition with stereographic projections we can replace $\BS^n$ by $\BR^n$ and suppose that $f(0)=0$. In fact, the number $4$ in $\frac 14B$ is chosen in such a way that this conformal transformation can be chosen so that the following two conditions are satisfied at the same time:
\begin{itemize}
\item $A=\{x\in\BR^n\vert s\le\vert x\vert\le t\}$ for suitable choices of $0<s<t<\frac 12$, and
\item $B$ contains the unit ball $B(0,1)=\{x\in\BR^n,\ \vert x\vert<1\}$.
\end{itemize}
Being quasiconformal on the ball $B(0,1)$, the restriction of $f$ to $B(0,\frac 12)$ has the quasisymmetry property (see \cite[Chapter 11]{Heinonen-book}, specially Theorems 11.3 and 11.14). This means more concretely that there are $C>0$ and $\alpha\ge 1$ depending only on the quasiconformally constant $K$ and on the dimension $n$ with 
$$\frac{|f(y)|}{|f(z)|} \le C\cdot\max\left\{\left( \frac{|y|}{|z|}\right)^\alpha,\left( \frac{|y|}{|z|}\right)^{1/\alpha}\right\}$$
for all $y,z\in B(0,\frac 12)\setminus\{0\}$. At this point we are ready to determine $\lambda_0$. Indeed, suppose that the sphere $S(0,r)=\{x\in\BR^n \,\vert\, \vert x\vert=r\}$ meets both $f(S(0,s))$ and $f(S(0,t))$ at points $y\in S(0,r) \cap f(S(0,s))$ and $z\in S(0,r) \cap f(S(0,t))$. Then, by quasisymmetry,
$$1 = \frac{|f(y)|}{|f(z)|} \le  C \left(\frac{s}{t}\right)^{1/\alpha}$$
and thus $t \le C^{\alpha} s$. This means that $\Mod(A)\ge\omega_{n-1}\left(\log C^{\alpha}\right)^{1-n}$. Thus, as long as 
$$\Mod(A)<\lambda_0\stackrel{\text{def}}=\min\left\{1,\omega_{n-1}\left(\log C^{\alpha}\right)^{1-n}\right\}$$ 
there exists a round annulus contained in the topological annulus $f(A)$. 

Supposing now that $\Mod(A)<\lambda_0$, let $A'=A_{r,R}$ be the maximal round annulus in $f(A)$. Then there exist $y\in S(0,r) \cap f(S(0,s))$ and $z\in S(0,R) \cap f(S(0,t))$. The same computation as above yields
$$\frac{R}{r} = \frac{|f(z)|}{|f(y)|} \le C\left( \frac{t}{s} \right)^{\alpha}\quad\text{ and }\quad\frac{r}{R} = \frac{|f(y)|}{|f(z)|} \le C\left( \frac{s}{t} \right)^{1/\alpha}.$$
Hence
$$\frac 1C\left(\frac ts\right)^{1/\alpha}\le \frac Rr\le C\left(\frac ts\right)^\alpha.$$
The claim follows now from \eqref{eq-modulus-round}. 
\end{proof}

\section{Barycentric extension}\label{sec-BCG}

In this section we recall the construction of the Besson--Courtois--Gallot barycentric extension. We refer to \cite{BCG} for details of the construction.

\subsection{Barycenter}
First recall the definition of the Busemann function
\[
B_o(\cdot,\theta):\BH^{n+1}\to\BR,\ \ B_o(y,\theta)=\lim_{p\to\theta}\left(d(y,p)-d(o,p)\right)
\]
centered at $\theta\in\D_\infty\BH^{n+1}=\BS^n$ and normalized at the base point $o\in\BH^{n+1}$. Given a probability measure $\mu$ on $\BS^n$, consider the function
\[
\CB_\mu:\BH^{n+1}\to\BR,\ \ y\mapsto \int_{\BS^n}B_o(y,\theta)d\mu(\theta).
\]
Busemann functions are convex and thus $\CB_\mu$ is convex as well. In fact, if for example $\mu$ is atomless, then $\CB_\mu$ is also strictly convex and proper. It follows that $\CB_\mu$ has a unique minimum, the {\em barycenter} $\BCG(\mu)$ of $\mu$. This construction is natural in the sense that 
$$\BCG(g_*\mu)=g(\BCG(\mu))$$
for any $g\in\Isom(\BH^{n+1})$. Note also that uniqueness of the barycenter implies that, as long as the measures remain atomless, the barycenter depends continuously on the measure $\mu$, where we have endowed the space of measures with the usual weak-$\ast$-topology.

Our next aim is to prove that the barycenter satisfies what we refer to as the {\em gravity principle}: \emph{if most of the measure is in northern Sweden, then the barycenter is in a Nordic country.} Before making this precise, recall that the Busemann function $B_o(\cdot,\theta)$ is smooth and that its gradient at $y$ is the vector
\[
\DD B_o(\cdot,\theta)\vert_y=-n_\theta(y),
\]
where $n_\theta(y)$ is as earlier the unit vector in $T_y\BH^{n+1}$ pointing to $\theta$, that is
$$\lim_{R\to\infty}\exp_y(R\cdot n_\theta(y))=\theta.$$
Smoothness of the Busemann functions and the dominated convergence theorem imply that $\CB_\mu(\cdot)$ is also smooth and that 
\begin{equation}\label{eq-bcg-implicit1}
y=\BCG(\mu)\text{ if and  only if }\DD\CB_\mu\vert_y=-\int_{\BS^n}n_\theta(y)d\mu(\theta)=0.
\end{equation}
We are ready to establish the gravity principle.

\begin{lem}\label{lem:china}
Let $\mu$ be a probability measure on $\BS^n$ and $B\subset\BS^n$ a round ball for which $\mu(B)>\frac 23$. Then $d_{\BH^{n+1}}(\BCG(\mu), \Dome(B)) < 1$.
\end{lem}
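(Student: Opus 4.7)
The plan is to argue by contradiction: assume $y:=\BCG(\mu)$ satisfies $d:=d_{\BH^{n+1}}(y,\Dome(B))\ge 1$. In particular $\Dome(B)\neq\BH^{n+1}$, so there is a unique nearest point $z_*\in\Dome(B)$ from $y$. Let $v\in T^1_y\BH^{n+1}$ be the unit vector at $y$ in the direction of $[y,z_*]$, and let $P$ be the totally geodesic hyperplane through $z_*$ orthogonal to $v$. Since $P$ is a supporting hyperplane of $\Dome(B)$, the dome is contained in the closed half-space $H$ bounded by $P$ on the side opposite to $y$, so $B\subset\overline{\Dome(B)}\cap\BS^n\subset\D_\infty H$.

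The heart of the proof is the visual estimate
\[
\langle v,n_\theta(y)\rangle\;\ge\;\tanh d\;\ge\;\tanh 1\qquad\text{for every }\theta\in\D_\infty H.
\]
For $\theta$ in the interior of $\D_\infty H$, the geodesic ray $[y,\theta)$ crosses $P$ at a unique point $z_\theta$; in the hyperbolic triangle $y,z_*,z_\theta$ the angle at $z_*$ is $\pi/2$ because $[y,z_*]\perp P$ while $[z_*,z_\theta]\subset P$. The right-triangle identity
\[
\cos\angle_y\bigl([y,z_*],[y,z_\theta]\bigr)\;=\;\frac{\tanh|yz_*|}{\tanh|yz_\theta|}\;=\;\frac{\tanh d}{\tanh|yz_\theta|}\;\ge\;\tanh d,
\]
combined with the fact that the direction at $y$ of $[y,z_\theta]$ coincides with $n_\theta(y)$, yields the claim; the boundary case $\theta\in\D_\infty P$ follows by continuity. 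Alternatively one may verify this in the Poincar\'e ball model centered at $y$, where $P$ is a Euclidean sphere orthogonal to $\BS^n$ with Euclidean center $\coth(d)\cdot v$ and radius $\sqrt{\coth^2 d-1}$; the tangent cone from the origin to this sphere has half-angle precisely $\arccos(\tanh d)$ and contains $\D_\infty H$.

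With this estimate in hand, the barycenter identity~\eqref{eq-bcg-implicit1} reads $\int_{\BS^n}n_\theta(y)\,d\mu(\theta)=0$. Pairing with $v$ and splitting the integral between $B$ and $B^c$ gives
\[
0\;=\;\int_B\langle v,n_\theta(y)\rangle\,d\mu + \int_{B^c}\langle v,n_\theta(y)\rangle\,d\mu\;\ge\;\tanh(1)\,\mu(B)-\mu(B^c),
\]
where we used the visual estimate on $B\subset\D_\infty H$ and the trivial bound $\langle v,n_\theta(y)\rangle\ge -1$ on $B^c$. Rearranging yields $\mu(B)\le 1/(1+\tanh 1)$, which is strictly less than $2/3$ because $\tanh(1)>\tfrac12$, contradicting the hypothesis $\mu(B)>2/3$. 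The only non-routine step is the visual cone estimate of the middle paragraph; the compatibility of the constants $2/3$ and $1$ in the statement is exactly the arithmetic inequality $\tanh(1)>1/2$, and any pair $(\alpha,d_0)$ with $\alpha(1+\tanh d_0)>1$ would give the same conclusion.
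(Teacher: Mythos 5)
Your proof is correct and follows essentially the same strategy as the paper: both arguments invoke the vanishing-gradient characterization \eqref{eq-bcg-implicit1} of the barycenter, pair $\int n_\theta\,d\mu$ against a fixed reference direction, and split the integral over $B$ and $B^{c}$ to obtain a contradiction when $\mu(B)>\tfrac23$. The only difference is in the choice of reference direction and the angle estimate --- you take the direction to the nearest point of $\Dome(B)$ and a right-triangle computation yielding the sharp bound $\tanh d$, while the paper uses the direction to the visual center of $B$ and the cruder bound $\langle n_\theta(x),n_\zeta(x)\rangle\ge\tfrac12$ deduced from $\diam_x(B)<\pi/2$ --- but this is a cosmetic variation, not a different method.
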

\begin{proof}
Let $x\in\BH^{n+1}$ be a point for which $d_{\BH^{n+1}}(x,\Dome(B))\ge 1$. We have to show that $x\neq\BCG(\mu)$. 

By elementary hyperbolic geometry, $d_{\BH^{n+1}}(x,\Dome(B))\ge 1$ implies that $\diam_x(B)<\frac\pi 2$. 
Let $\zeta$ be the center of the ball $B$ with respect to the visual metric $d_x$. Then $\langle n_\theta(x),n_\zeta(x)\rangle_x\ge\frac 12$ for all $\theta\in B$, where $\langle\cdot,\cdot\rangle_x$ is the (hyperbolic) scalar product on $T_x\BH^{n+1}$. We also have $\langle n_\theta(x),n_\zeta(x)\rangle_x\ge -1$ for a general $\theta\in\BS^n$. 
Thus
\[
\left\langle\int_{\BS^n}n_\theta(x)d\mu(\theta),n_\zeta(x)\right\rangle\ge\frac 12\mu(B)-\mu(\BS^n\setminus B)>\frac 12\cdot \frac 23-\frac 13=0.
\]
It follows from \eqref{eq-bcg-implicit1} that $x\neq\BCG(\mu)$. 
\end{proof}

\subsection{The Besson--Courtois--Gallot map}\label{subsec-bcg}
We are now ready to define the {\em barycentric extension}
$$F_f:\BH^{n+1}\to\BH^{n+1}$$
of a non-constant quasi-regular map $f:\BS^n\to \BS^n$. Given $x\in\BH^{n+1}$, we push $\vol_x$ forward using $f$ and define
$$F_f(x)=\BCG(f_*(\vol_x));$$
recall that $\vol_x$ is the visual probability measure associated to $x$.
This makes sense since $f_*\vol_x$ is absolutely continuous with respect to Lebesgue measure and thus atomless.

We recall some properties of the map $F_f$. First, the barycentric extension $F_f$ is defined in metric terms and thus behaves well with respect to isometries. More concretely, we have
$$F_{h\circ f\circ g}(x)=h(F_f(g(x)))$$
 for all $x\in\BH^{n+1}$ and all $g,h\in\Isom(\BH^{n+1}).$

Second, note that the measure $f_*(\vol_x)$ depends continuously on $x$. This implies that $F_f$ is continuous. Moreover, the gravity principle (Lemma \ref{lem:china}) implies that the map $F_f:\BH^{n+1}\to\BH^{n+1}$ extends continuously to the original map $f:\BS^n\to\BS^n$. 

The barycentric extension $F_f$ in fact smooth, independently of the regularity of $f$. To see that this is the case identify, for example using the disk model, $\BH^{n+1}$ with an open subset of $\BR^{n+1}$ and thus consider tangent vectors to $\BH^{n+1}$ as vectors in $\BR^{n+1}$. We may then consider the map
\[
G_f:\BH^{n+1}\times\BH^{n+1}\to\BR^{n+1},\ \ G_f(x,y)=-\DD \CB_{f_*(\vol_x)}\vert_y=\int_{\BS^n}n_\theta(y)df_*(\vol_x)(\theta).
\]
From \eqref{eq-bcg-implicit1} we get an implicit equation
\[
F_f(x)=y\text{ if and only if }G_f(x,y)=0.
\]
Given that visual measures $\vol_x$ are absolutely continuous to each other with Radon-Nikodym derivative
\[
\frac{d\vol_x}{d\vol_o}(\theta)=e^{-(n-1)B_o(x,\theta)}
\]
for $x\in \BH^{n+1}$ and $\theta\in \BS^n$, 
we get from the transformation formula the following alternative form 
\begin{equation}\label{eq-Gf}
G_f(x,y)=\int_{\BS^n}n_{f(\theta)}(y)e^{-(n-1)B_o(x,\theta)}d\vol_o(\theta).
\end{equation}
for $G_f$.
Smoothness of the individual functions $(x,y)\mapsto n_{f(\theta)}(y)e^{-(n-1)B_o(x,\theta)}$ implies, together with the dominated convergence theorem, that $G_f$ is smooth. Moreover, positivity of the Hessian of $y\mapsto\CB_{f_*(\vol_x)}(y)$ means that $\frac{D G_f}{d y}$ is non-singular. It follows thus from the implicit function theorem that $F_f$ is smooth, and that its derivatives are computed in terms of the derivatives of $G_f$. For instance, the first and second derivatives are given by
\begin{equation*}\label{eq-dev1}
\begin{split}
D F_f\vert_x(v)&=-\left(\frac{D G_f}{dy}\Big\vert_{F_f(x)}\right)^{-1}\left(\frac{\D G_f}{dx}\Big\vert_x(v)\right) \text{ and }\\
D^2 F_f\vert_x(v)&=-\left(\frac{D G_f}{dy}\Big\vert_{F_f(x)}\right)^{-1}\left(\frac{D^2G_f}{dx^2}\Big\vert_x(v)+\frac{D^2G_f}{dy^2}\Big\vert_{F_f(x)}(DF_f\vert_x(v))\right).
\end{split}
\end{equation*}

\subsection{Compactness}
We will prove Proposition \ref{prop-lipschitz}, Proposition \ref{prop-volume}, and Proposition \ref{prop-radialQI} in the next section. The proofs rely, in one way or another, on the following key observation. 

\begin{prop}\label{key-lemma}
Let $n\ge 2$, $K\ge 1$, and $d\ge 1$. Suppose $(f_m)$ is a sequence of $K$-quasiregular mappings $\BS^n \to \BS^n$ of degree at most $d$ and whose barycentric extensions satisfy $F_{f_m}(o)=o$. Then there exists a set $P\subset \BS^n$ of cardinality at most $d$ and a subsequence $(f_{m_k})$ converging locally uniformly on $\BS^n\setminus P$ to a non-constant $K$-quasiregular mapping $f\colon \BS^n \to \BS^n$ of degree at most $d$ which also satisfies $F_f(o)=o$.
\end{prop}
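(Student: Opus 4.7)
\smallskip
\noindent\textbf{Proof plan for Proposition \ref{key-lemma}.} The plan is to combine the general compactness result of Proposition \ref{prop-compactness} with the implicit characterisation \eqref{eq-bcg-implicit1} of the barycenter, and to use dominated convergence to transfer the barycenter condition from the sequence to the limit. The only things that need to be shown beyond Proposition \ref{prop-compactness} are that the limit map $f$ is non-constant and that its barycentric extension still sends $o$ to $o$.

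\smallskip
\noindent\textbf{Step 1: Extract the subsequence.} First I would apply Proposition \ref{prop-compactness} directly to $(f_m)$. This yields a finite set $P\subset\BS^n$ with $|P|\le d$ and a subsequence, which by abuse of notation I still denote $(f_m)$, converging locally uniformly on $\BS^n\setminus P$ to a $K$-quasiregular map $f\colon\BS^n\to\BS^n$ of degree at most $d$. Since $P$ is finite we have $\vol_o(P)=0$, so $f_m\to f$ pointwise $\vol_o$-almost everywhere.

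\smallskip
\noindent\textbf{Step 2: Pass the barycenter equation to the limit.} The hypothesis $F_{f_m}(o)=o$ together with \eqref{eq-bcg-implicit1} and the formula $G_{f_m}(o,o)=\int_{\BS^n} n_{f_m(\theta)}(o)\,d\vol_o(\theta)$ from \eqref{eq-Gf} says
\[
\int_{\BS^n} n_{f_m(\theta)}(o)\,d\vol_o(\theta)=0
\]
for every $m$. The map $\theta\mapsto n_\theta(o)$ is a continuous map from $\BS^n$ into the unit sphere of the fixed tangent space $T_o\BH^{n+1}$, so the integrand is uniformly bounded and converges $\vol_o$-almost everywhere to $n_{f(\theta)}(o)$. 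The dominated convergence theorem therefore gives
\[
\int_{\BS^n} n_{f(\theta)}(o)\,d\vol_o(\theta)=0.
\]

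\smallskip
\noindent\textbf{Step 3: Non-constancy and conclusion.} If $f$ were constant, equal to some $p\in\BS^n$, then the identity just derived would read $n_p(o)=0$, contradicting the fact that $n_p(o)$ is a unit vector. Hence $f$ is non-constant. In particular, by the absolute continuity of the push-forward (the third \texttt{fact} in Section \ref{sec-quasiregular}), $f_*\vol_o$ is atomless, so $F_f(o)$ is a well-defined element of $\BH^{n+1}$ characterised by $G_f(o,F_f(o))=0$. The identity in Step 2 is precisely $G_f(o,o)=0$, and uniqueness of the minimum of $\CB_{f_*\vol_o}$ (strict convexity from the atomlessness) gives $F_f(o)=o$.

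\smallskip
\noindent\textbf{Main obstacle.} The only delicate point is justifying the passage to the limit under the integral sign, since the convergence $f_m\to f$ is only locally uniform off the finite exceptional set $P$. However this is painless: $\vol_o$ gives no mass to $P$, and the integrand is globally bounded by $1$ in the Euclidean norm, so dominated convergence applies directly. No further estimate is required.
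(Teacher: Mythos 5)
Your proof is correct, and it takes a genuinely different route from the paper's. After Step 1 (which coincides with the paper's), the paper argues via the measures themselves: it observes that $(f_m)_*\vol\to f_*\vol$ weak-$\ast$, then applies the gravity principle (Lemma \ref{lem:china}) to rule out the constant limit (a constant $f$ would force $(f_m)_*\vol\to\delta_\theta$, hence $F_{f_m}(o)\to\theta\in\BS^n$, contradicting $F_{f_m}(o)=o$), and finally invokes the weak-$\ast$ continuity of the barycenter on atomless measures to get $F_f(o)=o$. You instead pass the \emph{critical-point equation} $G_{f_m}(o,o)=0$ to the limit via dominated convergence on the bounded integrand $\theta\mapsto n_{f_m(\theta)}(o)$, obtain $G_f(o,o)=0$, and read off both conclusions from that single identity: non-constancy because a constant $f\equiv p$ would force the unit vector $n_p(o)$ to vanish, and $F_f(o)=o$ because strict convexity of $\CB_{f_*\vol_o}$ (available once $f_*\vol_o$ is atomless, which follows from non-constancy) makes $o$ the unique critical point. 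Your route is somewhat more self-contained and explicit — it avoids appealing to the gravity principle and to the soft claim about continuity of the barycenter in the weak-$\ast$ topology — at the price of a small bookkeeping point that you handle correctly: the gradient identity must be established \emph{before} knowing $f$ is non-constant, and then re-used twice, first to exclude the constant case and then (once atomlessness is available) to identify $F_f(o)$.
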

\begin{proof}
From Proposition \ref{prop-compactness} we get that there is a set $P\subset \BS^n$ of cardinality at most $d$ such that, up to passing to a subsequence, the maps $f_m$ converge locally uniformly on $\BS^n\setminus P$ to a $K$-quasiregular mapping $f\colon \BS^n \to \BS^n$ of degree at most $d$. It remains to show that $f$ is not constant and that $F_f(o)=o$, but this is easily done. Convergence of $f_m$ to $f$ on $\BS^n\setminus P$ implies that the push-forward measures converge, that is, 
\begin{equation}
\label{eq-measure-limit}
\lim_{m\to\infty}(f_m)_*\vol=f_*\vol.
\end{equation}

If the limit map $f$ is constant, the measures $(f_m)_*\vol$ converge to a Dirac measure centred at a point $\theta$. Then the gravity principle (Lemma \ref{lem:china}) implies that the points $F_{f_m}(o)=\BCG((f_m)_*\vol)$ converge to $\theta$, contradicting our assumption. Thus $f$ is non-constant.

By the convergence of measures $(f_m)_*\vol$, the sequence of barycenters $F_{f_m}(o)=\BCG((f_m)_*\vol)$ converges to the barycenter $F_f(o)=\BCG(f_*\vol)$. It follows that $F_f(o)=o$. 
\end{proof}

We may use Proposition \ref{key-lemma} to obtain information on quasiregular maps $\BS^n \to \BS^n$ themselves. A first observation along those lines, and one that we are actually going to use below, is that whenever $f$ is a non-constant quasiregular map whose barycentric extension fixes $o$, a large portion of the domain of the map can be covered by decent sized balls on which $f$ restricts to a quasiconformal map:

\begin{lem}
\label{lem:balls}
Let $\varepsilon>0$, $K\ge 1$ and $d\ge 1$. Then there exists $\delta>0$ such that whenever $f:\BS^n\to\BS^n$ is a non-constant $K$-quasiregular map of degree $\deg(f)\le d$ and satisfying $F_f(o)=o$, then there exists a collection $\CB= \{B_1,\ldots, B_r\}$ of round balls in $\BS^n$ with $\vol(\bigcup_i \frac{1}{100}B_i)\ge 1-\varepsilon$ and satisfying for each $i$ that $\diam(B_i)\ge \delta$ and that the restriction $f|_{B_i}$ is quasiconformal. 
\end{lem}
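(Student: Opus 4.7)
The plan is to argue by contradiction using the compactness result from Proposition \ref{key-lemma}. Suppose the statement fails for some $\varepsilon>0$, $K\ge 1$, $d\ge 1$. Then there is a sequence $(f_m)$ of non-constant $K$-quasiregular self-maps of $\BS^n$ of degree at most $d$ with $F_{f_m}(o)=o$, such that no collection of round balls of diameter at least $1/m$ satisfies the two conclusions of the lemma for $f_m$. By Proposition \ref{key-lemma}, after passing to a subsequence, $(f_m)$ converges locally uniformly on $\BS^n\setminus P$ (with $|P|\le d$) to a non-constant $K$-quasiregular map $f$ of degree at most $d$ satisfying $F_f(o)=o$.

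Next I would use the good structure of the limit $f$. By the Chernavskii--V\"ais\"al\"a theorem, the branch set $B_f$ has topological dimension at most $n-2$, hence vanishing Lebesgue measure; together with the finite set $P$, the set $B_f\cup P$ is closed and of measure zero. Fix a compact subset $K_0\subset\BS^n\setminus (B_f\cup P)$ with $\vol(K_0)\ge 1-\varepsilon/2$. Every $x\in K_0$ admits a round normal neighborhood $U_x$ on which $f$ restricts to a quasiconformal homeomorphism onto its image. By a Lebesgue-number type compactness argument I would extract finitely many such normal balls $B_1,\ldots,B_N$ whose $\frac{1}{100}$-contractions still cover $K_0$, with $\overline{B_i}$ contained in $\BS^n\setminus P$ and $\diam(B_i)\ge \delta_0$ for some $\delta_0>0$ depending on $f$.

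The last step is to transfer quasiconformality from the limit $f$ to the individual maps $f_m$. Since $f_m\to f$ uniformly on $\overline{B_i}$ and $f$ is a local homeomorphism on an open neighborhood of $\overline{B_i}$, the upper semi-continuity of the local index (\cite[Lemma VI.8.13]{Rickman-book}, combined with \cite[Theorem VI.8.6]{Rickman-book}) ensures that $f_m$ is also a local homeomorphism on $\overline{B_i}$ for all sufficiently large $m$. A standard degree argument then upgrades this to global injectivity on $B_i$: because $f|_{B_i}$ is a quasiconformal homeomorphism, $f(B_i)$ is a Jordan domain and $f_m(\partial B_i)$ is Hausdorff-close to $f(\partial B_i)$, so the local degree of $f_m$ at points of $f_m(B_i)$ is $1$ for large $m$. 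Choose $m$ so large that $1/m<\delta_0$ and $f_m|_{B_i}$ is quasiconformal for every $i\in\{1,\dots,N\}$; then $\{B_i\}$ is a valid collection for $f_m$, contradicting the failure hypothesis.

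The main obstacle is this last transfer step: making precise how locally uniform convergence of $K$-quasiregular maps preserves injectivity on a fixed ball. Everything else is fairly routine once one has the key compactness statement and the measure-theoretic regularity of the branch set of $f$; the injectivity upgrade is the single place where one has to combine the analytic convergence with a topological argument about how the branched cover $f_m$ degenerates to $f$ near the normal neighborhood $B_i$.
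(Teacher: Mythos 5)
Your proposal follows essentially the same compactness-and-contradiction argument as the paper: extract a convergent subsequence via Proposition \ref{key-lemma}, cover a set of almost full measure away from the branch set of the limit $f$ by round balls on which $f$ restricts to a quasiconformal map, and transfer quasiconformality back to $f_m$ for large $m$. If anything, you are slightly more careful in two spots where the paper is terse — you explicitly remove the exceptional set $P$ along with the branch set when choosing the compact set $K_0$ (needed so that convergence is uniform on the chosen balls, a point the paper glosses over), and you spell out the degree-theoretic step behind the assertion that $f_m|_{B_i}$ becomes quasiconformal for large $m$.
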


\begin{proof}
Seeking a contradiction suppose that for some $\varepsilon$, $K$ and $d$ there is a sequence $(f_m)$ of non-constant $K$-quasiregular maps of degree $d$ and satisfying $F_{f_m}(o)=o$, and for which there is no $\delta$ for which a desired covering exists. Passing to a subsequence we might assume by Proposition \ref{key-lemma} that the maps $f_m$ converge to a non-constant quasiregular map $f$. Now, the branch set $B_f$ is closed and has measure $0$, which means that there is a compact set $C\subset\BS^n\setminus B_f$ with $\vol(C)\ge 1-\epsilon$. Now, each point in $C$ is contained in some round ball $B'$ such that $f$ is quasiconformal on the ball $101B'$ of hundred and one times the radius. Compactness of $C$ implies that it can be covered by finitely many such balls $B_1,\dots,B_r$. Now, since $f\vert_{101B_i'}$ is quasiconformal, we deduce that $f_m\vert_{100B_i'}$ is quasiconformal for large $m$ and all $i$. It now suffices to set $B_i=100B_i'$ and $\delta=\min\{\diam(B_i)\,\vert\, i=1,\dots,r\}$ to obtain the desired contradiction.
\end{proof}

\section{Proofs of the propositions}\label{sec-meat}
As the title hopefully suggests, in this section we prove Proposition \ref{prop-lipschitz}, Proposition \ref{prop-volume}, and Proposition \ref{prop-radialQI}. Working like little ants, we deal with them one by one.

\subsection{Lipschitz bound}
We prove first that the operator norms of the first and second derivatives of the barycentric extension of a non-constant $K$-quasiregular map of degree $d$ are bounded by quantities that depend only on $K$, $d$ and the dimension. More precisely we prove Proposition \ref{prop-lipschitz}:

\begin{named}{Proposition \ref{prop-lipschitz}}
Let $F_f:\BH^{n+1}\to\BH^{n+1}$ be the barycentric extension of a non-constant $K$-quasi-regular map $f:\BS^{n}\to\BS^{n}$ of degree $d\ge 1$. Then there is $L=L(K,d,n)>0$ satisfying 
$$\norm{DF_f(x)}, \norm{D^2F_f(x)}\le L$$
for all $x\in\BH^{n+1}$. Here $\Vert\cdot\Vert$ stands for the operator norm induced by the hyperbolic metric.
\end{named}
\begin{proof}
We prove the claim for $\norm{DF_f(x)}$ -- the proof for the norm of the second derivative is analogous. Arguing by contradiction suppose that, for some $K$ and $d$, there is a sequence $(f_m)$ of $K$-quasiregular maps of degree $d$ and a sequence of points $(x_m)$ in $\BH^{n+1}$ such that $\norm{DF_{f_m}(x_m)}\to\infty$. Consider then sequences $(g_m)$ and $(h_m)$ of isometries of $\BH^{n+1}$ satisfying
$$g_m(o)=x_m\text{ and }h_m(F_{f_m}(x_m))=o$$
for each $m\in \BN$. As we mentioned above, we have  
$$F_{h_m\circ f_m\circ g_m}=h_m\circ F_{f_m}\circ g_m.$$
Since $g_m$ and $h_m$ are isometries, we have that
$$\norm{DF_{h_m\circ f_m\circ g_m}(o)}=\norm{DF_{f_m}(g_m(o))}=\norm{DF_{f_m}(x_m)}\to\infty$$
as $m\to \infty$. This means that, up to replacing $f_m$ by $h_m\circ f_m\circ g_m$ we can assume that $x_m=o$ and that $F_{f_m}(o)=o$ for all $m$. We do this from now on.

Proposition \ref{key-lemma} asserts now that there is a set $P$ containing at most $d$ points and such that, up to passing to a subsequence, the restrictions of the maps $f_m$ to $\BS^n\setminus P$ converge locally uniformly to a non-constant quasiregular map $f:\BS^n\to\BS^n$ with $F_f(o)=o$. 

The dominated convergence theorem implies now that the maps $G_{f_i}:\BH^{n+1}\times\BH^{n+1}\to\BR^{n+1}$, given in \eqref{eq-Gf}, and their derivatives, converge then to the map $G_f$ and its derivative. It follows thus from the formula (given at the end of Section \ref{subsec-bcg}) for the differential of the barycentric extensions that $D F_{f_i}\vert_o$ converges to $D F_f\vert_o$. This contradicts that assumption we made at the very beginning of the proof and thus concludes the proof of Proposition \ref{prop-lipschitz}.
\end{proof}

\subsection{Bound on volume contraction}
Using a similar argument we prove also that the barycentric extension of a $K$-quasiregular map of degree $d$ maps sets of large visual volume to sets of large visual diameter. This is the content of Proposition \ref{prop-volume}:

\begin{named}{Proposition \ref{prop-volume}}
Let $F_f:\BH^{n+1}\to\BH^{n+1}$ be the barycentric extension of a non-constant $K$-quasi-regular map $f:\BS^n\to\BS^n$ of degree $d\ge 1$. For every $\eta>0$ there are $\delta=\delta(\eta,K,d,n)>0$ and $R_0=R_0(\eta,K,d,n)>0$ such that for every $x\in\BH^{n+1}$ and every round ball $B\subset\BS^n$ with diameter $\diam_{F_f(x)}(B)\le\delta$ we have
$$\vol_x\left(\left\{v\in T^1_x\BH^{n+1}\middle\vert F_f(\exp_x(Rv))\in\Dome(B)\right\}\right)<\eta$$
for all $R\ge R_0$. 
\end{named}
\begin{proof}
Arguing again by contradiction suppose that there are $\eta>0$, a sequence $(f_m)$ of $K$-quasiregular maps of degree $d$, a sequence of points $(x_m)$ in $\BH^{n+1}$, and a sequence of round balls $B_m\subset\BS^n$ satisfying 
$$\diam_{F_{f_m}(x_m)}(B_m)\to 0$$
while
$$\vol_{x_m}\left(\left\{v\in T^1_{x_m}\BH^{n+1}\middle\vert F_f(\exp_{x_m}(mv))\in\Dome(B_i)\right\}\right)\ge \eta.$$
Choosing as above sequences $(g_m)$ and $(h_m)$ of isometries of $\BH^{n+1}$ satisfying
$$g_m(o)=x_m\text{ and }h_m(F_{f_m}(x_m))=o,$$
we may replace the maps $f_m$ by $h_m\circ f_m\circ g_m$ and suppose 
$$x_m=o\text{ and }F_{f_m}(o)=o$$
from the very beginning. Now, as in the proof of Proposition \ref{prop-lipschitz}, we may apply Proposition \ref{key-lemma} and pass to a subsequence to assume that the maps $f_m$ converge, off a set $P$ having at most $d$ points, to a non-constant quasi-regular map $f$. This implies that whenever 
$$U\subset\bar\BH^{n+1}=\BH^{n+1}\cup\BS^n$$
is an open neighborhood of the set $P$ then the maps $F_{f_i}$ converge to $F_f$ uniformly on $\bar\BH^{n+1}\setminus U$. We may choose $U$ in such a way that the following two properties are satisfied:
\begin{itemize}
\item $\vol\left(\left\{v\in T^1_o\BH^{n+1}\middle\vert \exp_o(v)\in U\right\}\right)<\frac 12\eta$, and
\item for $R\ge 1$ and $v\in T^1_o\BH^{n+1}$, we have $\exp_o(Rv)\in U$ if and only if $\exp_o(v)\in U$.
\end{itemize}
Note that, by the second property, we may identity $\BS^n\cap U$ with 
$$\left\{v\in T^1_o\BH^{n+1}\middle\vert \exp_o(v)\in U\right\}$$ 
and we do so from now on.

Now, by the uniform convergence of the restrictions of $F_{f_m}$ to the restriction of $F_f$ on $\BH^{n+1}\setminus U$ implies that the maps 
\[
\phi_m:\BS^n\setminus(\BS^n\cap U)\to\bar\BH^{n+1},\quad v\mapsto F_{f_m}(\exp_o(m\cdot v)),
\]
converge uniformly to the restriction $f \colon \BS^n \setminus (\BS^n\cap U) \to \BS^n$ of $f$ as $m\to \infty$.

Since the preimages of points under the map $f$ have at most $d$ preimages, it follows that preimages under $\phi_m$ of sets of small visual diameter are contained in the union of at most $d$ sets of small visual diameter, and thus have small volume. It follows that there exists $\delta>0$ for which, for all balls $B$ of diameter $\delta$, we have
$$\vol(\{v\in T_o^1\BH^{n+1}\vert\exp_o(v)\in U\text{ and } \phi_m(v)\in\Dome(B)\})<\frac 12\eta.$$
Since also the set $\{v\in T^1_o\BH^{n+1}\vert \exp_o(v)\in U\}$ has volume less than $\frac 12\eta$, we obtain that
$$\vol(\{v\in T_o^1\BH^{n+1}\vert\phi_m(v)\in\Dome(B)\})<\eta.$$
This contradicts our assumption and thus proves Proposition \ref{prop-volume}.
\end{proof}

\subsection{Radial quasi-isometry property}

And now the grand finale, Proposition \ref{prop-radialQI}.

\begin{named}{Proposition \ref{prop-radialQI}}
Let $F_f:\BH^{n+1}\to\BH^{n+1}$ be the barycentric extension of a non-constant $K$-quasiregular map $f:\BS^n\to\BS^n$ of degree $d\ge 1$. There is $c_0=c_0(n,K,d)>0$ such that, for each $\varepsilon>0$, there is $R_0=R_0(n,K,d,\varepsilon)$ such that the set
$$Q_x = \{ v\in T^1_x\BH^n \vert d_{\BH^n}(F_f(x), F_f(\exp_x(Rv)) \ge c_0 R - c_0\ \text{for\ all}\ R\ge R_0\}$$
has measure $\vol_x(Q_x)\ge 1-\varepsilon$ for each $x\in \BH^n$.
\end{named}

Before going any further note that it is easy to see that the constant $R_0$ has to actually depend on $\epsilon$. In the course of the proof of Proposition \ref{prop-radialQI} we make use of the following fact:

\begin{lem}\label{distance-modulus}
Let $B\subset\BS^n$ be a round ball of at most radius $\frac 12$ and let $x_1=\Center(\D\Dome(B))$ be the center of the boundary of the dome of $B$. For any $x_2\in\Dome(\frac 1{25}B)$ there is a round annulus $A\subset B$ satisfying
$$\Mod(A)= \omega_{n-1}(d_{\BH^{n+1}}(x_1,x_2)-2)^{1-n}$$
and whose dome $\Dome(A)$ separates $x_1$ from $x_2$. Moreover, if $B_1$ and $B_2$ are the two connected components of $\BS^n\setminus A$, then $\vol_{x_i}(B_i)\ge\frac 23$ for $i=1,2$.
\end{lem}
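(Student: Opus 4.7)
The plan is to construct $A$ explicitly as a round annulus whose center $\theta_A\in\BS^n$ is the endpoint at infinity of the geodesic ray from $o$ through $x_2$. This places $x_2$ on the axis $[o,\theta_A]$; since $x_2\in\Dome(\frac 1{25}B)$ forces $\theta_A\in\overline{\frac{1}{25}B}$, the centers satisfy $d(\theta_A,\theta)\le\rho/25$, where $\theta$ and $\rho$ denote the center and radius of $B$. I then take $A=\{\eta\in\BS^n:r<d(\eta,\theta_A)<R\}$ with the radii
\[
\log\cot(r/2)=d_{\BH^{n+1}}(o,x_2)-1,\qquad\log\cot(R/2)=d_{\BH^{n+1}}(o,x_2)-d_{\BH^{n+1}}(x_1,x_2)+1.
\]
The bounding hyperplanes $H_r, H_R$ of $\Dome(A)$ are perpendicular to $[o,\theta_A]$ and meet it at exactly these distances from $o$, so they are separated by precisely $d_{\BH^{n+1}}(x_1,x_2)-2$. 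Lemma~\ref{lem:distance-modulus} then immediately gives the required modulus formula.

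With this annulus in hand, the three remaining conditions follow. For the containment $A\subset B$, using $d(\theta_A,\theta)\le\rho/25$, it is enough to check $R\le 24\rho/25$, which in turn amounts to bounding the Gromov defect $d_{\BH^{n+1}}(o,x_1)+d_{\BH^{n+1}}(x_1,x_2)-d_{\BH^{n+1}}(o,x_2)$ by $\log(25/24)\approx 0.041$. Using the Busemann identity $B_o(x_1,\theta_A)=\log(\cosh s-\sinh s\cos\alpha)$ with $s=d_{\BH^{n+1}}(o,x_1)=\log\cot(\rho/2)$ and $\alpha\le\rho/25$ the angle at $o$ between $x_1$ and the axis $[o,\theta_A]$, and passing to the limit $x_2\to\theta_A$, a short computation shows this defect is at most $\log(1+(1-\cos\alpha)\cos\rho/(1-\cos\rho))\le\log(1+1/625)$ for $\rho\le 1/2$, comfortably within the margin. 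Separation is then immediate: the strict containment $\Dome(D_R)\subsetneq\Dome(B)$ places $x_1\in\partial\Dome(B)$ in $\Dome(D_2)$, while by construction $x_2\in[o,\theta_A]$ lies at hyperbolic distance $1$ past $H_r$, hence in $\Dome(D_1)$.

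For the volume bound, a direct computation in Fermi coordinates along $[o,\theta_A]$ combined with the Gromov-defect estimate shows that each $x_i$ sits at hyperbolic distance at least $1$ from the hyperplane bounding $\Dome(B_i^c)$. Applying the gravity principle (Lemma~\ref{lem:china}) --- more specifically, the estimate $\diam_x(B)<\pi/2$ whenever $d_{\BH^{n+1}}(x,\Dome(B))\ge 1$ drawn from its proof --- yields $\diam_{x_i}(B_i^c)<\pi/2$, and hence $\vol_{x_i}(B_i^c)<1/3$ for $n\ge 2$ (since the measure of any spherical cap of angular radius $\pi/4$ is bounded by $1/3$). This gives $\vol_{x_i}(B_i)\ge 2/3$. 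The main technical point is the Gromov-defect estimate in the containment step; all other verifications then follow from the construction together with elementary hyperbolic trigonometry.
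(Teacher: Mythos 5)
Your construction is essentially the same as the paper's: a round annulus whose center $\theta_A\in\BS^n$ is the endpoint of the ray from $o$ through $x_2$, with bounding hyperplanes placed along the axis $[o,\theta_A)$ at distances chosen so that the gap between them is $d_{\BH^{n+1}}(x_1,x_2)-2$; the paper phrases this by pushing $1$ unit inward from $x_1'$ (the point of $[o,x_2]$ where the round annulus first enters $B$) and from $x_2$, while you parametrize directly by $d_{\BH^{n+1}}(o,x_2)$ and $d_{\BH^{n+1}}(x_1,x_2)$, which differ only by the Gromov-defect adjustment you estimate. Your write-up is considerably more explicit than the paper's, which compresses both the containment $A\subset B$ and the visual-diameter step into ``elementary hyperbolic geometry,'' so it is a useful expansion of that terse proof.

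One inaccuracy worth flagging: you argue that the defect $G=d_{\BH^{n+1}}(o,x_1)+d_{\BH^{n+1}}(x_1,x_2)-d_{\BH^{n+1}}(o,x_2)$ is bounded above by its Busemann limit as $x_2\to\theta_A$. In fact, along the unit-speed ray $\gamma$ from $o$ towards $\theta_A$ the function $t\mapsto d_{\BH^{n+1}}(x_1,\gamma(t))-t$ is non-increasing (first variation gives derivative $\langle\gamma'(t),v(t)\rangle-1\le 0$), so the Busemann limit $\log\bigl(1+\tfrac{(1-\cos\alpha)\cos\rho}{1-\cos\rho}\bigr)$ is a \emph{lower} bound for $G$, not an upper bound. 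This does not break the proof: because $x_2\in\Dome(\tfrac 1{25}B)$ forces $d_{\BH^{n+1}}(o,x_2)-d_{\BH^{n+1}}(o,x_1)\ge \log 25$, a second-order expansion of the hyperbolic law of cosines shows that $G$ exceeds the limit only by a term of order $\epsilon/\cosh^2(t-s)$, which is minuscule next to the available margin $1+\log\cot(\rho/2)-\log\cot(12\rho/25)\approx 0.95$. So the conclusion is sound, but the inequality should be argued as a uniform estimate on $G$ (e.g.\ via the law of cosines directly) rather than by ``passing to the limit.''
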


Recall that the {\em center of the boundary of the dome of $B$} is the point $\Center(\D\Dome(B))\in\D\Dome(B)$ through which runs the geodesic ray from the base point $o$ to the spherical center of $B$. Note also that the number $25$ has been chosen in the statement to guarantee that the distance between the two involved points $x_1$ and $x_2$ is relatively large, namely $d_{\BH^{n+1}}(x_1,x_2)\ge\log(25)\sim 3.218875\ldots>2$. After these comments we can start the proof.

\begin{proof}[Proof of Lemma \ref{distance-modulus}]
Consider the arc $[o,x_2]$ oriented as departing $o$ and arriving to $x_2$. Let $x_1'\in[o,x_2]$ be minimal such that the annulus $A(x_1',x_2)$ as in \eqref{eq-conformal annulus} is contained in $B$. Elementary hyperbolic geometry shows that 
\[
d_{\BH^{n+1}}(x_1,x_1')<\frac 1{10}.
\]
Now choose $[\bar x_1,\bar x_2]\subset[x_1',x_2]$ for which
\[
d_{\BH^{n+1}}(\bar x_1,x_1)=d_{\BH^{n+1}}(\bar x_2,x_2)=1.
\]
Let $A=A(\bar x_1,\bar x_2)$; this is a round annulus because $\bar x_1,\bar x_2$ and $o$ are colinear. Elementary hyperbolic geometry implies then again that the $\vol_{x_i}(B_i)\ge \frac 23$, where $B_1$ and $B_2$ are as in the statement. Since the round annulus $A$ has modulus 
$$\Mod(A)=\omega_{n-1}d_{\BH^{n+1}}(\bar x_1,\bar x_2)^{1-n},$$ 
the claim follows.
\end{proof}

We are now ready to obtain the main estimate needed in the proof of Proposition \ref{prop-radialQI}.

\begin{lem}\label{blahblahblah}
For $K\ge 1$ and $n\ge 2$, there are $C>0$ and $c>0$ for which the following holds: Let $f:\BS^n\to\BS^n$ be a $K$-quasiregular map, $F_f:\BH^{n+1}\to\BH^{n+1}$ the barycentric extension of $f$, and $B\subset\BS^n$ a round ball of at most radius $\frac 12$ having the property that $f\vert_B:B\to f(B)$ is quasiconformal. Let also $x_1=\Center(\D\Dome(B))$ be the center of the dome of $B$, and $x_2\in\Dome(\frac 1{100}B)$ another point. Then
\[
d_{\BH^{n+1}}(F_f(x_1),F_f(x_2))\ge C\cdot d_{\BH^{n+1}}(x_1,x_2)-c.
\]
\end{lem}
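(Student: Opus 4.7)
The plan is to reduce the lower bound on $d_{\BH^{n+1}}(F_f(x_1),F_f(x_2))$ to a modulus/distance estimate, by constructing a round annulus $A\subset\tfrac 14 B$ whose dome separates $x_1$ from $x_2$ and whose image under $f$ contains a round annulus $A'\subset f(B)$ of comparable modulus. Proposition~\ref{prop-lipschitz} lets us absorb bounded contributions into $c$, so we may assume $d_{\BH^{n+1}}(x_1,x_2)$ is as large as needed. Setting $\hat x_1=\Center(\D\Dome(\tfrac 14 B))$, elementary hyperbolic geometry gives $d_{\BH^{n+1}}(x_1,\hat x_1)=O(1)$, and the Lipschitz bound allows us to work with $\hat x_1$ in place of $x_1$.

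Since $x_2\in\Dome(\tfrac 1{100}B)=\Dome(\tfrac 1{25}\cdot\tfrac 14 B)$, Lemma~\ref{distance-modulus} applied to the ball $\tfrac 14 B$ with the pair $(\hat x_1,x_2)$ produces a round annulus $A\subset\tfrac 14 B$ with
$$\Mod(A)=\omega_{n-1}\bigl(d_{\BH^{n+1}}(\hat x_1,x_2)-2\bigr)^{1-n},$$
whose dome separates $\hat x_1$ from $x_2$ and whose complementary components $B_1,B_2\subset\BS^n$ satisfy $\vol_{\hat x_1}(B_1),\vol_{x_2}(B_2)\ge\tfrac 23$; here $B_2$ is the small inner disk contained in $\tfrac 14 B$ and $B_1\supset\BS^n\setminus B$ is the outer one. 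Once $d_{\BH^{n+1}}(\hat x_1,x_2)$ is large enough that $\Mod(A)\le\lambda_0$, Lemma~\ref{modulus-nicerlemma} (applicable because $A\subset\tfrac 14 B$ and $f|_B$ is quasiconformal) yields a round annulus $A'\subset f(A)\subset f(B)$ with $\Mod(A')\le C_1\Mod(A)$. Writing $\tilde B_1',\tilde B_2'$ for the two components of $\BS^n\setminus A'$, the homeomorphism property of $f|_B$ together with the injectivity consequence $f^{-1}(A')\cap B\subset A$ force, after labeling, $f(B_2)\subset\tilde B_2'$ and $f(B_1\cap B)\subset\tilde B_1'$.

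The easy side is $x_2$: since $f^{-1}(f(B_2))\supset B_2$, we obtain $f_*\vol_{x_2}(\tilde B_2')\ge\vol_{x_2}(B_2)\ge\tfrac 23$, and the gravity principle (Lemma~\ref{lem:china}) places $F_f(x_2)$ within hyperbolic distance $1$ of $\Dome(\tilde B_2')$. Positioning $F_f(\hat x_1)$ on or near the opposite side $\Dome(\tilde B_1')$ is the main obstacle, because the corresponding direct estimate
$$f_*\vol_{\hat x_1}(\tilde B_1')\ge\vol_{\hat x_1}(B_1\cap B)=\vol_{\hat x_1}(B_1)-\vol_{\hat x_1}(\BS^n\setminus B)$$
falls short of $\tfrac 23$: a definite portion of $\vol_{\hat x_1}$ sits on $\BS^n\setminus B$, where $f$ is not controlled by the quasiconformality hypothesis. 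To overcome this I plan to argue by compactness and contradiction, in the style of the proofs of Propositions~\ref{prop-lipschitz} and~\ref{prop-volume}. If no uniform bound $d_{\BH^{n+1}}(F_f(\hat x_1),\Dome(\tilde B_1'))\le C(K,n)$ held, one would extract a sequence of counterexamples $(f_m,B_m,x_{2,m})$ and normalize by isometries of $\BH^{n+1}$ so that $B_m=B_0$, $\hat x_{1,m}=\hat x_{1,0}$, and the base value $F_{f_m}(\hat x_{1,0})$ become fixed; since $f_m|_{B_0}$ forms a uniformly quasiconformal (hence normal) family, the finite exceptional set $P$ from Proposition~\ref{key-lemma} is disjoint from $B_0$. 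Passing to a subsequential $K$-quasiregular limit $f$ with $f|_{B_0}$ quasiconformal, continuity of the barycentric extension at the boundary and the quasisymmetric (H\"older) behavior of $f|_{B_0}$ near $\theta_\infty=\lim x_{2,m}\in\overline{\tfrac 1{100}B_0}$ force $F_{f_m}(x_{2,m})$ to approach the boundary at a definite linear rate in $d_{\BH^{n+1}}(\hat x_{1,0},x_{2,m})$, contradicting the supposed collapse.

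Once $F_f(\hat x_1)$ and $F_f(x_2)$ are known to lie within bounded hyperbolic distance of opposite sides of $\Dome(A')$, Lemma~\ref{lem:distance-modulus} yields
$$d_{\BH^{n+1}}(F_f(\hat x_1),F_f(x_2))\ge\left(\frac{\omega_{n-1}}{\Mod(A')}\right)^{1/(n-1)}-O(1)\ge C_1^{-1/(n-1)}\bigl(d_{\BH^{n+1}}(\hat x_1,x_2)-2\bigr)-O(1),$$
and the Lipschitz transfer back from $\hat x_1$ to $x_1$ completes the proof with constants $C=C(K,n)$ and $c=c(K,n)$.
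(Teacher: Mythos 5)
Your outline tracks the paper's proof closely: replace $B$ by $B'=\tfrac14 B$ and $x_1$ by $\hat x_1=\Center(\D\Dome(B'))$, invoke Lemma~\ref{distance-modulus} to produce a separating annulus $A\subset B'$, push forward through Lemma~\ref{modulus-nicerlemma} to get a round annulus $\hat A\subset f(A)$, apply the gravity principle (Lemma~\ref{lem:china}) on both sides, convert back to a distance via Lemma~\ref{lem:distance-modulus}, and absorb the $\hat x_1\leftrightarrow x_1$ transfer with Proposition~\ref{prop-lipschitz}. This is exactly the architecture of the paper's proof.

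You have, moreover, put your finger on a genuine imprecision: the inclusion $f(B_1)\subset\hat B_1$ asserted in the paper cannot be justified, because $B_1\supset\BS^n\setminus B$ and $f$ is not controlled there (only $f(B_1\cap B)\subset\hat B_1$ can be read off from the injectivity of $f|_B$, together with the containments $\hat A\subset f(A)$ and $\hat B_2\subset f(B_2\cup A)$). Hence the lower bound $(f_*\vol_{\hat x_1})(\hat B_1)\ge\vol_{\hat x_1}(B_1\cap B)$ is what one actually obtains, and the paper's reliance on the stated bound $\vol_{\hat x_1}(B_1)\ge\tfrac23$ from Lemma~\ref{distance-modulus} is not by itself sufficient.

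However, the compactness/contradiction device you propose to close this gap is not clearly workable and is much heavier than necessary. The normalization you describe (freeze $B_m$, freeze $\hat x_{1,m}$, freeze the value of $F_{f_m}$ there) is not naturally compatible with Proposition~\ref{key-lemma}, whose hypothesis is $F_{f_m}(o)=o$ and whose exceptional set $P$ you cannot simply declare disjoint from $B_0$ without argument (the limit can very well branch on $\D B_0$, and the degree of the $f_m$ — which does not even appear as a hypothesis in the lemma — enters through the cardinality of $P$). The final step (``quasisymmetric behavior of $f|_{B_0}$ forces $F_{f_m}(x_{2,m})$ to the boundary at a definite linear rate'') is itself essentially the conclusion you are trying to prove, so the argument risks circularity.

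The shorter and correct repair is quantitative. Because $\hat x_1=\Center(\D\Dome(\tfrac14 B))$ sits at hyperbolic distance at least a definite constant $t_0>0$ (roughly $\log 4$) inside $\Dome(B)$, the angle-of-parallelism formula gives $\vol_{\hat x_1}(\BS^n\setminus B)\le v_0(n)$ with $v_0(n)<\tfrac13$ for all $n\ge 2$. On the other hand, in the proof of Lemma~\ref{distance-modulus} one is free to take the cut points $\bar x_1,\bar x_2$ at distance $T$ rather than distance $1$ from $x_1,x_2$; this only changes the additive constant in the modulus identity and makes $\vol_{\hat x_1}(B_1)\ge 1-\delta(T)$ with $\delta(T)\to0$. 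Choosing $T=T(n)$ so that $1-\delta(T)-v_0(n)>\tfrac23$ then yields $\vol_{\hat x_1}(B_1\cap B)>\tfrac23$ directly, and the gravity principle applies to $(f_*\vol_{\hat x_1})(\hat B_1)\ge\vol_{\hat x_1}(B_1\cap B)$ without any limiting argument. (When $d_{\BH^{n+1}}(\hat x_1,x_2)\le 2T$ the desired inequality is trivial with a suitable $c$, so there is no loss in assuming this distance is large.) I would recommend you replace the compactness paragraph by this computation; the rest of your write-up is correct.
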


The constant $100$ is chosen to be the product of $25$ from Lemma \ref{distance-modulus} and $4$ from Lemma \ref{modulus-nicerlemma}. 

\begin{proof}
Let $B'=\frac 14B$ be the ball of radius one fourth of the radius of $B$ and let $x_1'=\Center(\D\Dome(B'))$ be the center of the boundary of the dome of $B'$. To unify notation we set $x_2'=x_2$ and note that $d_{\BH^{n+1}}(x_1,x_1')\le\log(4)<2$. Since $\frac 1{100}B=\frac 1{25}B'$, we get from Lemma \ref{distance-modulus} a conformally round annulus $A\subset B'$ which 
satisfies 
$$\omega_{n_1}d_{\BH^{n+1}}(x_1',x_2')^{1-n}\le \Mod(A)\le \omega_{n_1}(d_{\BH^{n+1}}(x_1',x_2')-2)^{1-n}$$
and such that $\Dome(A)$ separates $x_1'$ from $x_2'$. Moreover, if 
$B_1$ and $B_2$ are the connected components of $\BS^n\setminus A$ labelled such that $B_2\subset B'\subset B$, then we also have $\vol_{x_i'}(B_i)\ge\frac 23$ for $i=1,2$.

Now, by the choice of $B'=\frac 14B$, we may apply Lemma \ref{modulus-nicerlemma}. Thus, with the constants $\lambda_0$ and $C_1$ as in the said lemma, we obtain that the topological annulus $f(A)$ contains a round annulus $\hat A$ satisfying 
$$\frac{1}{C_1} \Mod(A)\le \Mod(\hat A) \le C_1 \Mod(A),$$
as long as $\Mod(A)\le \lambda_0$. 

Let $\hat B_1$ and $\hat B_2$ be the two connected components of $\BS^n\setminus\hat A$, label-ed in such a way that $f(B_i)\subset\hat B_i$ and note that we have
$$(f_*\vol_{x_i'})(\hat B_i)\ge \vol_{x_i'}(B_i)\ge\frac 23$$
for $i=1,2$. The gravity principle (Lemma \ref{lem:china}) implies that  
$$d_{\BH^{n+1}}(F_f(x_i'),\Dome(\hat B_i))\le 1,$$
which implies, via Lemma \ref{distance-modulus}, that
$$d_{\BH^{n+1}}(F_f(x_1'),F_f(x_2'))\ge \sqrt[n-1]{\omega_{n-1}\cdot\Mod(\hat A)^{-1}}-2.$$
Taking into account that 
$$\Mod(\hat A)\le C_1\cdot \Mod(A)\le C_1\cdot \omega_{n-1}(d_{\BH^{n+1}}(x_1,x_2)-2)^{1-n}$$
we thus get that
$$d_{\BH^{n+1}}(F_f(x_1'),F_f(x_2'))\ge \frac 1{C_1^{\frac 1{n-1}}}d_{\BH^{n+1}}(x_1',x_2')-4.$$
Taking now into account that $F_f$ is $L$-Lipschitz by Proposition \ref{prop-lipschitz}, that $x_2=x_2'$ and that $d_{\BH^{n+1}}(x_1,x_1')<2$ we get that
$$d_{\BH^{n+1}}(F_f(x_1),F_f(x_2))\ge C\cdot d_{\BH^{n+1}}(x_1,x_2)-c$$
for $C=C_1^{\frac{-1}{n-1}}$ and $c=4-2L-2C_1^{\frac{-1}{n-1}}$. We have proved the lemma.
\end{proof}

We are now ready to prove Proposition \ref{prop-radialQI}.

\begin{proof}[Proof of Proposition \ref{prop-radialQI}]

First thing to do is to choose isometries $g$ and $h$ of $\BH^{n+1}$ with $g(o)=x$ and $h(F_f(x))=o$ and replace the $K$-quasiregular map $f$ by the $K$-quasiregular map $h\circ f\circ g$ of the same degree $d=\deg(f)$. In this way we might assume, as we do from now on, that $F_f(o)=o$. 

By Lemma \ref{lem:balls}, there exist $\delta=\delta(K,d,n,\epsilon)>0$ and a collection $\CB= \{B_1,\ldots, B_r\}$ of balls in $\BS^n$ for which $\vol(\bigcup_i \frac{1}{100}B_i)\ge 1-\frac 12\varepsilon$ and which satisfy, for each $i$, that $\frac12\ge \diam(B_i)\ge \delta$ and that the restriction $f|_{B_i}$ is quasiconformal. The volume bound implies that there exists $R_0=R_0(\delta)$ satisfying 
$$\vol\left(\left\{ v\in T^1_x\BH^n \middle\vert \exp_x(Rv)\in\bigcup_i\Dome\left(\frac{1}{100}B_i\right)\right\}\right)\ge 1-\varepsilon$$
for all $R\ge R_0$. 

Let now $x_i=\Center(\D\Dome(B_i))$ be the center of the boundary of the dome of $B_i$ for $i=1,\dots,r$ and note that the distance from these points to the base point $o$ is bounded in terms of a function of $\delta$, that is, 
\[
d_{\BH^{n+1}}(o,x_i)\le\alpha(\delta)\sim -\log(\delta).
\]
Now, Lemma \ref{blahblahblah} and the Lipschitz property of $F_f$ (Proposition \ref{prop-lipschitz}) shows that 
\[
d_{\BH^{n+1}}(F_f(o),F_f(x))\ge C\cdot d_{\BH^{n+1}}(o,x)-(c+(C+1)\cdot\alpha(\delta))
\]
for all $x\in\bigcup_i \Dome(\frac{1}{100}B_i)$. This proves the claim.
\end{proof}

\section{Dimension $n=1$}

The exclusion of dimension $n=1$ in Theorem \ref{harmonic} stems from the ill-suitedness of the definition of quasiregular mappings to that case. Indeed, for $n=1$, the condition 
\[
\norm{Df} \le K \det Df
\]
reduces to the condition $|du| \le K du$, where $du$ is the derivative of a function $u$ satisfying $f(\theta) = e^{iu(\theta)}$. Thus we only obtain that $du>0$, which implies that $f$ is an orientation preserving covering map $\BS^1 \to \BS^1$. 

To obtain a class of mappings with better compactness properties, we pass from a quasiconformality condition to a quasisymmetry condition. Recall that a homeomorphism $h \colon \BS^1\to \BS^1$ is \emph{$\eta$-quasisymmetric}, where $\eta \colon [0,\infty) \to [0,\infty)$ is a homeomorphism, if 
\begin{equation}
\label{qs}
\frac{d(f(x),f(y))}{d(f(x),f(z))} \le \eta\left( \frac{d(x,y)}{d(x,z)}\right)
\end{equation}
for all triples $x,y,z$ of distinct points in $\BS^1$.

In this spirit, we say that $f\colon \BS^1\to \BS^1$ is an \emph{$\eta$-quasisymmetric covering} if $f$ is a composition of an $\eta$-quasisymmetry $\BS^1\to \BS^1$ post-composed by the covering map $z\mapsto z^d$. Although, more intrinsic definition definitely has its virtues, note that, for example, the natural localization of the quasisymmetry condition \eqref{qs} leads to this definition. In addition, we immediately benefit from the good compactness properties of quasisymmetric homeomorphisms and obtain a counterpart of Lemma \ref{prop-compactness}.

\begin{lem}
\label{n=1:prop-compactness}
Let $\eta\colon [0,\infty) \to [0,\infty)$ be a homeomorphism and $d\in \BN$. Then, given a sequence $(f_m)$ of $\eta$-quasisymmetric covering mappings $\BS^1\to \BS^1$ of degree at most $d$, there exists a subsequence $(f_{m_k})$ converging uniformly on $\BS^1\setminus P$ to an $\eta$-quasisymmetric covering $f\colon \BS^1 \to \BS^1$ of degree at most $d$. 
\end{lem}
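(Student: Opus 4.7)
The plan is to reduce to compactness of $\eta$-quasisymmetric homeomorphisms of $\BS^1$ by factoring each $f_m$ through the power map, and then invoke the classical theory of quasisymmetries. After passing to a subsequence I may assume the degrees $\deg f_m$ stabilize at a common value $d'\in\{1,\dots,d\}$. I then write $f_m=p_{d'}\circ g_m$ with $p_{d'}(z)=z^{d'}$ and $g_m\colon\BS^1\to\BS^1$ an $\eta$-quasisymmetric homeomorphism, the lift existing by the very definition of quasisymmetric covering and being unique up to pre-composition by an element of the deck group of $p_{d'}$; one can fix a canonical choice, say by requiring $g_m(1)$ to lie in a fixed fundamental arc for this cyclic action.

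The key step, which is the main work, is to extract a uniformly convergent subsequence of $(g_m)$. Fix three equally spaced points $a_1,a_2,a_3\in\BS^1$; the quasisymmetry inequality \eqref{qs}, applied to all permutations of the triple, bounds the pairwise distances $d(g_m(a_i),g_m(a_j))$ both above and below purely in terms of $\eta(1)$. Passing to a further subsequence, $(g_m(a_i))_{i=1}^3$ converges to a triple of distinct points. With three image points anchored, the standard modulus-of-continuity estimate for normalized quasisymmetries (see e.g.\;\cite[Chapter 10]{Heinonen-book}) produces a uniform modulus of continuity for the whole family $(g_m)$, and Arzel\`a--Ascoli yields a uniform limit $g\colon\BS^1\to\BS^1$. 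Passing to the limit in \eqref{qs}, and using the nondegeneracy of $(g(a_1),g(a_2),g(a_3))$ to rule out a collapse, shows $g$ satisfies the same quasisymmetric inequality and is injective; a degree count for the orientation-preserving $g_m$ then makes $g$ surjective as well, hence an $\eta$-quasisymmetric homeomorphism.

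It follows that $f_m=p_{d'}\circ g_m \to p_{d'}\circ g$ uniformly on $\BS^1$, and $p_{d'}\circ g$ is by definition an $\eta$-quasisymmetric covering of degree $d'\le d$, completing the proof with $P=\emptyset$. The absence of an exceptional set, in contrast to Proposition~\ref{prop-compactness}, reflects the fact that the surjectivity of each $g_m$ combined with the uniform quasisymmetric gauge $\eta$ controls the whole map at once and prevents the concentration-of-measure phenomenon which in higher dimensions forced the introduction of a finite $P$. The only genuine obstacle is the compactness step for the lifts $(g_m)$; once that is in hand, the remainder is immediate bookkeeping.
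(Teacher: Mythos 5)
Your plan --- factor each $f_m$ through the power map $z\mapsto z^{d'}$ after stabilizing the degree, extract a uniformly convergent subsequence of the quasisymmetric lifts, and push back through the factorization --- is essentially the paper's proof. The extra care about stabilizing $d'$ and about the non-uniqueness of the lift is welcome (the paper simply writes $z\mapsto z^d$, which is a mild abuse when some $f_m$ have smaller degree), and the observation that one may take $P=\emptyset$ is correct.

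There is, however, a real gap in the compactness step. You assert that the quasisymmetry inequality \eqref{qs}, applied to all permutations of a fixed equally spaced triple $(a_1,a_2,a_3)$, bounds the pairwise image distances $d(g_m(a_i),g_m(a_j))$ \emph{below} purely in terms of $\eta(1)$. This is false: the inequality controls only the \emph{ratios} of the three distances, pinning them between $1/\eta(1)$ and $\eta(1)$, and all three can go to zero simultaneously. Affine contractions $z\mapsto c+\epsilon(z-c)$ are $\eta$-quasisymmetric for an $\eta$ independent of $\epsilon$ and degenerate uniformly to a constant, so no estimate coming solely from the quasisymmetric gauge can prevent the triple from collapsing. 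What prevents it here is that each $g_m$ is a \emph{surjection} of $\BS^1$ onto itself: equicontinuity of the $\eta$-quasisymmetric family is automatic on a bounded connected space (choose $z$ with $d(x,z)\ge 1$ and read \eqref{qs}), a uniform limit of surjections onto a compact space is surjective and hence not constant, and then injectivity and the gauge $\eta$ pass to the limit. You do invoke surjectivity later (``a degree count\ldots''), but it is needed at the normalization step, and the claim as you wrote it is unsupported. A smaller slip: two lifts through $p_{d'}$ differ by \emph{post}-composition by a rotation in the deck group of $p_{d'}$, not pre-composition; your normalization scheme is unaffected.
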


\begin{proof}
Factor $f_m = (z\mapsto z^d) \circ h_m$ for each $m\in \BN$. Since the family $\{h_m\}$ is $\eta$-quasisymmetric homeomorphisms is normal, we conclude that there exists a subsequence $(h_{m_k})$ converging uniformly to an $\eta$-quasisymmetric homeomorphism $h \colon \BS^1\to \BS^1$. Now we may take $f = (z\mapsto z^d) \circ h$.
\end{proof}

Regarding other results in Section \ref{sec-quasiregular}, we notice that assumption $n=1$ renders them obsolete. Lemma \ref{lem:distance-modulus} becomes the remark that for $x_1, x_2\in \BH^2$ on the opposite sides of the dome $\Dome(A(r,R))$ we have $d_{\BH^2}(x_1,x_2) \ge \log(R/r)$. The modulus estimates in Lemma \ref{modulus-nicerlemma} on the other hand are replaced by distance estimates given by function $\eta$. We leave formulation of this version to the interested reader. 

Having these facts -- and the fact that $f_*\vol$ is atomless -- at our disposal, it is easy to observe that the rest of the arguments of in the proof of Theorem \ref{harmonic} hold also for $n=1$. Theorem \ref{harmonic} thus holds for $n=1$ in the following form.

\begin{sat}
Let $f\colon \BS^1 \to \BS^1$ be an $\eta$-quasisymmetric covering. Then there exists harmonic map $H_f \colon \BH^2 \to \BH^2$ extending $f$.
\end{sat}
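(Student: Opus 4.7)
The plan is to repeat verbatim the strategy used to prove Theorem \ref{harmonic}, replacing "non-constant quasiregular map" by "$\eta$-quasisymmetric covering" throughout and supplying $n=1$ analogues of the three central Propositions \ref{prop-lipschitz}, \ref{prop-volume}, and \ref{prop-radialQI}. First I would define the Besson--Courtois--Gallot barycentric extension $F_f\colon \BH^2\to\BH^2$ by the same formula $F_f(x)=\BCG(f_*\vol_x)$. This is well-posed because, writing $f=(z\mapsto z^d)\circ h$ with $h$ an $\eta$-quasisymmetric homeomorphism, the push-forward $f_*\vol_x$ is atomless: homeomorphisms send atomless measures to atomless measures, and $z\mapsto z^d$ is a finite covering. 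The properties of $F_f$ recorded in Section \ref{sec-BCG}---smoothness via the implicit function theorem, equivariance under isometries, continuous extension to $f$ on the boundary---then carry over, as the gravity principle (Lemma \ref{lem:china}) is a purely hyperbolic-geometric statement, independent of dimension.

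Next I would establish the $n=1$ analogue of Proposition \ref{key-lemma}: any sequence $(f_m)$ of $\eta$-quasisymmetric coverings of degree at most $d$ with $F_{f_m}(o)=o$ admits a subsequence converging uniformly (on all of $\BS^1$, in fact) to a non-constant $\eta$-quasisymmetric covering $f$ satisfying $F_f(o)=o$. This follows from Lemma \ref{n=1:prop-compactness} together with the continuity of the barycenter on atomless measures, the latter again precluding a point-mass limit by the gravity principle. With this compactness in hand, the contradiction arguments proving Propositions \ref{prop-lipschitz} and \ref{prop-volume} transport without modification: both reduce, after an isometric normalisation placing the basepoint at the origin, to extracting a non-constant limit and invoking dominated convergence in the implicit formula for $DF_f$ or in the counting of preimages.

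The main obstacle is the radial quasi-isometry estimate Proposition \ref{prop-radialQI}, whose higher-dimensional proof went through Lemma \ref{blahblahblah} and relied on the modulus-distance dictionary (Lemma \ref{lem:distance-modulus}) together with modulus control along quasiconformal maps (Lemma \ref{modulus-nicerlemma}). In $n=1$ both must be replaced by direct quasisymmetric estimates, as noted by the authors. Concretely, for points $\bar x_1,\bar x_2\in\BH^2$ on opposite sides of the dome of a round annulus $A(r,R)\subset\BS^1$ one still has $d_{\BH^2}(\bar x_1,\bar x_2)\ge\log(R/r)$ by elementary hyperbolic trigonometry; conversely, since $f$ is locally the composition of an $\eta$-quasisymmetric homeomorphism with the smooth covering $z\mapsto z^d$, its restriction to any short arc $B$ is $\eta'$-quasisymmetric with $\eta'=\eta'(\eta,d)$, and therefore maps any round annulus $A\subset\tfrac{1}{4}B$ onto a topological annulus containing a round annulus whose log-ratio of radii is comparable to $\log(R/r)$, with comparison constants depending only on $\eta$ and $d$. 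Combining these two observations with the gravity principle exactly as in the proof of Lemma \ref{blahblahblah} yields, for every round arc $B$ on which $f|_B$ is quasisymmetric, every $x_1=\Center(\D\Dome(B))$, and every $x_2\in\Dome(\tfrac{1}{100}B)$, the inequality
\[
d_{\BH^2}(F_f(x_1),F_f(x_2))\ge C\cdot d_{\BH^2}(x_1,x_2)-c
\]
for constants $C,c$ depending only on $\eta$ and $d$.

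Finally, the $n=1$ version of Lemma \ref{lem:balls}---asserting the existence of a collection of round balls of definite size covering most of $\BS^1$ on which $f$ restricts to a quasisymmetric map---follows from the same contradiction argument via the new compactness, using that an $\eta$-quasisymmetric covering is automatically locally quasisymmetric on any sufficiently short arc. Combining this with the $n=1$ analogue of Lemma \ref{blahblahblah} yields Proposition \ref{prop-radialQI} in dimension one, and with all three propositions in hand the proof of Theorem \ref{harmonic} in Section \ref{sec-harmonic} applies line by line: solve the Dirichlet problem for $F_f$ on the balls $\BB(o,R)$, run the Benoist--Hulin contradiction argument using the new Lipschitz and radial estimates to bound the sup distance $\rho_R$ between the harmonic solution and $F_f$ uniformly, and pass to a subsequence converging locally uniformly to a harmonic map at bounded distance from $F_f$, which then extends continuously to the given $f$ on the boundary.
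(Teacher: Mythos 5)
Your proposal is correct and follows essentially the same route as the paper: define the barycentric extension (using atomlessness of $f_*\vol_x$), transport the compactness/Lipschitz/volume arguments via the quasisymmetric compactness of Lemma \ref{n=1:prop-compactness}, replace the modulus--distance dictionary (Lemma \ref{lem:distance-modulus}) by the direct estimate $d_{\BH^2}(x_1,x_2)\ge\log(R/r)$ and the modulus control (Lemma \ref{modulus-nicerlemma}) by quasisymmetric distortion bounds, and then rerun the Benoist--Hulin scheme. You have simply written out in detail what the paper explicitly leaves ``to the interested reader.''
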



\begin{thebibliography}{10}

\bibitem{Benoist-Hulin}
Y.~Benoist and D.~Hulin.
\newblock {Harmonic quasi-isometric maps between rank one symmetric spaces}.
\newblock {\em Annals of Mathematics}, 185(3):895--917, 2017.

\bibitem{BCG}
G.~Besson, G.~Courtois, and S.~Gallot.
\newblock {Minimal entropy and {M}ostow's rigidity theorems}.
\newblock {\em Ergodic Theory Dynam. Systems}, 16(4):623--649, 1996.

\bibitem{Douady-Earle}
A.~Douady and C.~J. Earle.
\newblock {Conformally natural extension of homeomorphisms of the circle}.
\newblock {\em Acta Mathematica}, 157(1):23--48, 1986.

\bibitem{Heinonen-book}
J.~Heinonen.
\newblock {\em {Lectures on analysis on metric spaces}}.
\newblock {Universitext}. Springer-Verlag, New York, 2001.

\bibitem{Hu-Muzician}
J.~Hu and O.~Muzician.
\newblock {Conformally natural extensions of continuous circle maps: II. The
  general case}.
\newblock {\em Journal d'Analyse Math{\'e}matique}, 132(1):81--107, Jun 2017.

\bibitem{Lemm-Markovic}
M.~Lemm and V.~Markovic.
\newblock {Heat flows on hyperbolic spaces}.
\newblock {\em arXiv preprint arXiv:1506.04345}, 2015.

\bibitem{Li-Wang}
P.~Li and J.~Wang.
\newblock {Harmonic rough isometries into Hadamard space}.
\newblock {\em Asian Journal of Mathematics}, 2:419--442, 1998.

\bibitem{Markovic-Invent}
V.~Markovic.
\newblock {Harmonic maps between 3-dimensional hyperbolic spaces}.
\newblock {\em Inventiones mathematicae}, 199(3):921--951, 2015.

\bibitem{Markovic-JAMS}
V.~Markovic.
\newblock {Harmonic maps and the {S}choen conjecture}.
\newblock {\em Journal of the American Mathematical Society}, 30(3):799--817,
  2017.

\bibitem{Pang-Nevo-Zalcman}
X.~Pang, S.~Nevo, L.~Zalcman, et~al.
\newblock {Quasinormal families of meromorphic functions}.
\newblock {\em Revista Matem{\'a}tica Iberoamericana}, 21(1):249--262, 2005.

\bibitem{Rickman-book}
S.~Rickman.
\newblock {\em {Quasiregular mappings}}, volume~26 of {\em {Ergebnisse der
  Mathematik und ihrer Grenzgebiete (3) [Results in Mathematics and Related
  Areas (3)]}}.
\newblock Springer-Verlag, Berlin, 1993.

\bibitem{Schoen}
R.~M. Schoen.
\newblock {The role of harmonic mappings in rigidity and deformation problems}.
\newblock {\em Lecture Notes in Pure and Applied Mathematics}, pages 179--179,
  1993.

\bibitem{Vaisala-book}
J.~V{\"a}is{\"a}l{\"a}.
\newblock {\em {Lectures on {$n$}-dimensional quasiconformal mappings}}.
\newblock Springer-Verlag, Berlin, 1971.
\newblock Lecture Notes in Mathematics, Vol. 229.

\bibitem{VaisalaICM}
J.~V{\"a}is{\"a}l{\"a}.
\newblock {A survey of quasiregular maps in {${\bf R}^{n}$}}.
\newblock In {\em {Proceedings of the {I}nternational {C}ongress of
  {M}athematicians ({H}elsinki, 1978)}}, pages 685--691. Acad. Sci. Fennica,
  Helsinki, 1980.

\end{thebibliography}

\end{document}